\documentclass{article}
\usepackage[T1]{fontenc}
\usepackage[english]{babel}
\usepackage{textcomp,lmodern}
\usepackage{mathtools,amssymb,amsthm}
%
%
%
%
%
%
%
%
%
%
%
%
\newtheorem{lemma}{Lemma}[section]
\newtheorem{corollary}[lemma]{Corollary}
\newtheorem{proposition}[lemma]{Proposition}
\newtheorem{theorem}[lemma]{Theorem}
\newtheorem*{theoremmain}{Theorem~\ref{Thm:MainThm}}\theoremstyle{definition}
\newtheorem{definition}[lemma]{Definition}
\newtheorem{remark}[lemma]{Remark}
\newtheorem{example}[lemma]{Example}
\DeclareMathOperator\Aut{Aut}
\DeclareMathOperator\diag{diag}
\DeclareMathOperator\id{id}
\DeclareMathOperator\Rist{Rist}
\DeclareMathOperator\Stab{Stab}
\newcommand*\abs[1]{\lvert#1\rvert}
\newcommand*\gen[1]{\langle#1\rangle}
\newcommand*{\Grig}{{\mathfrak G}}
\newcommand*\level[1]{\mathcal L_{#1}}
\newcommand*\setst[2]{\{#1\,|\,#2\}}
\newcommand*\subdirect{\leq_{\mathrm{s}}}
\newcommand*{\restr}[2]{\pi_{{#2}}(#1)}
%
%
%
%
%
\usepackage[colorlinks,breaklinks,bookmarks,plainpages=false,unicode=true]{hyperref}  
%
%
%
%
%
\title{Finitely generated subgroups of branch groups and subdirect products of just infinite groups}
\author{Rostistlav Grigorchuk, Paul-Henry Leemann, Tatiana Nagnibeda}
\date{\today}
\hypersetup{pdftitle={Finitely generated subgroups of branch groups and subdirect products of just infinite groups}, pdfauthor={Rostistlav Grigorchuk, Paul-Henry Leemann, Tatiana Nagnibeda}, pdfsubject={branch groups, finitely generated subgroups, subdirect product, Grigorchuk group, Gupta-Sidki groups, GGS groups, subgroup induction theorem, just infinite group, block structure}}
\begin{document}
\maketitle
\begin{flushright}
\textit{In memory of Sergei Ivanovich Adian}
\end{flushright}
\begin{abstract}
The aim of this paper is to describe the structure of the finitely generated subgroups of a family of branch groups, which includes the first Grigorchuk group and the Gupta-Sidki $3$-group.
We then use this to show that all groups in the above family are subgroup separable (LERF).

These results are obtained as a corollary of a more general structural statement on subdirect products of just infinite groups.
\end{abstract}
%
%
%
%
%
%
%
%
%
%
\section{Introduction}
A group is \emph{branch} if it acts faithfully on a spherically homogeneous rooted tree and has the lattice of subnormal subgroups similar to the structure of the tree~\cite{MR1754662,MR1820639,MR1765119}. A group $G$ is \emph{self-similar} if it has a faithful action on a $d$-regular rooted tree, $d\geq 2$, such that any section of any element $g\in G$ is again an element of the group modulo the canonical identification of the subtree and the original tree.
Just infinite branch groups constitute one of three classes of \emph{just infinite} groups (infinite groups whose proper quotients are all finite)~\cite{MR1765119}. Self-similar groups appear naturally in holomorphic dynamics~\cite{MR2162164}. 
Both classes are also important in many other areas of mathematics.
Although quite different, these two classes of groups have large intersection, and many self-similar groups are also branch. In the class of finitely generated branch self-similar groups, there are torsion groups and torsion free groups; groups of intermediate growth and groups of exponential growth; non-elementary amenable and nonamenable groups. Branch self-similar groups have very interesting subgroup structure.
Precise definitions, more details, and relevant references can be found in~\cite{MR1841755,MR2162164}.

Among the most important examples of branch self-similar groups is the $3$-generated $2$-group $\Grig$ of intermediate growth~\cite{MR712546} which is defined by its action on the rooted binary tree $T$.
See~\cite{MR1786869} for an introduction to this group and~\cite{MR2195454} for detailed information and a list of open problems about~it.
Much is known about subgroups of $\Grig$, in particular about the stabilizers of vertices of the tree $T$ and of points on the boundary of $T$; the rigid stabilizers; the centralizers; certain subgroups of small index~\cite{MR1872621,LeemannThese}, maximal subgroups~\cite{MR1841763} as well as \emph{weakly maximal subgroups} (subgroups of infinite index that are maximal for this property)~\cite{L2019}.

In~\cite{GN-Oberwolfach} the first and the third named authors announced a structural result for finitely generated subgroups of $\Grig$ in terms of a new notion of block subgroups, see Definition~\ref{Def:BlockStructureBranch} and Example~\ref{example:block} illustrated by Figure~\ref{Figure:Block}.
\begin{figure}[htbp]
\centering
\includegraphics{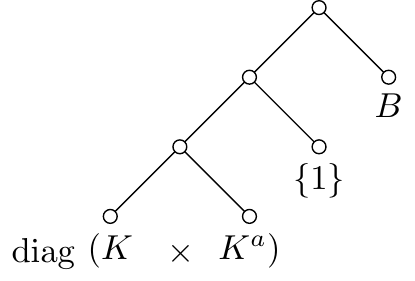}
\caption{A block subgroup of the first Grigorchuk group.}
\label{Figure:Block}
\end{figure}

The main purpose of this article is to prove a weak version of this result in a more general context, which applies in particular to the Gupta-Sidki $3$-group~$G_3$, see Theorem~\ref{Thm:MainThm}.
We will also derive some important consequences of~it, notably that the groups in question are subgroup separable, see Theorem~\ref{Thm:LERF}.
The desired result for branch groups will follow from a more general result of independent interest about subdirect products of just infinite groups, see Theorem~\ref{Thm:SubdirectBloc}.

The article is organized as follows.
The next section contains the definitions and the principal results. It is subdivided in two subsections, the first one deals with branch groups while the second one is about general subdirect products.
Section~\ref{Section:Subdirect} contains the proofs of the general statements on subdirect products and notably of Theorem ~\ref{Thm:SubdirectBloc}.
Finally, Section~\ref{Section:ProofsBranch} is devoted to the proof of the results concerning branch groups, in particular of Theorem~\ref{Thm:MainThm}.

\paragraph{Acknowledgements}
We are grateful to Dominik Francoeur for numerous valuable remarks on the first version of this paper.
The authors gratefully acknowledge support of the Swiss National Science Foundation.
The second author performed this work within the framework of the LABEX MILYON (ANR-10-LABX-0070) of Universit\'e de Lyon, within the program ``Investissements d'Avenir'' (ANR-11-IDEX-0007) operated by the French National Research Agency (ANR).
The first and the third authors were partly supported by the grant of the Government of the Russian Federation No 14.W03.31.0030.
%
%
%
%
%
%
%
%
%
%
\section{Definitions and results}
%
%
%
%
%
\subsection{Branch groups}\label{subsection:BranchGroups}
The main goal of this research is to understand the structure of subgroups of the first Grigorchuk group $\Grig$ and of the Gupta-Sidki $3$-group $G_3$, as well as of some other branch groups, that are closed in profinite topology (observe that branch groups are residually finite). One class of such subgroups consists of finitely generated subgroups, as proven in~\cite{MR2009443} and ~\cite{MR3513107}. It is shown there that every infinite finitely generated subgroup of $\Grig$ (respectively $G_3$) is commensurable\footnote{Recall that two abstract groups $G_1$ and $G_2$ are \emph{(abstractly) commensurable} if there exists $H_i$ of finite index in $G_i$ such that $H_1$ and $H_2$ are isomorphic. In particular, if $H$ is a finite index subgroup of $G$, then $H$ is commensurable to $G$.} with $\Grig$ (respectively with $G_3$ or $G_3\times G_3$). This unusual property relies on the fundamental result of Pervova~\cite{MR1841763} that every maximal subgroup of $\Grig$ (respectively of the Gupta-Sidki $p$-group $G_p$) has finite, hence $= 2$ (respectively $=p$), index.
For strongly self-replicating (see Definition~\ref{Def:SelfRepl}) just infinite groups with the congruence subgroup property, the property to have all maximal subgroups of finite index is preserved when passing to commensurable groups~\cite[Lemma 4]{MR2009443}, and thus weakly maximal subgroups in $\Grig$ (respectively $G_p$) are closed in the profinite topology.
For a branch group $G$, the stabilizers of points in the boundary of the tree are examples of weakly maximal subgroups~\cite{MR1841750}, but there are much more, see~\cite{MR3478865}.
See~\cite{L2019} for a description of all weakly maximal subgroups of $\Grig$ and~$G_p$.

Let $T$ be \emph{$d$-regular rooted tree}.
This is a connected graph without cycles, with a distinguished vertex, the root, of degree $d$ and such that every other vertex has degree $d+1$.
The vertices of $T$ are in natural bijection with the free monoid $\{0,\dots,d-1\}^*$, which is the set of finite words in the alphabet $\{0,\dots,d-1\}$.
Under this identification, the root corresponds to the empty word $\emptyset$.
There is a natural partial order on the vertices of $T$ defined by $v\leq w$ if $v$ is a prefix of $w$, equivalently if the unique path without backtracking from the root to $w$ hits $v$ before $w$.
The \emph{$n$\textsuperscript{th} level}, $\level{n}$, of $T$ is the set of all vertices at distance $n$ from the root, or equivalently, the set of all words of length $n$.
For a vertex $v$ of $T$, we denote by $T_v$ the subtree of $T$ consisting of all vertices $w\geq v$, naturally rooted at $v$.
By $\Aut(T)$ we denote the automorphism group of $T$, that is, the set of all graph isomorphisms from $T$ to itself that preserve the root.
Equivalently $\Aut(T)$ can be seen as the set of bijections of the free monoid $\{0,\dots,d-1\}^*$ that preserve length and prefixes.\footnote{A bijection $\varphi$ of $\{0,\dots,d-1\}^*$ preserves prefixes if for all words $u,v$ there exists $v'$ such that $\varphi(uv)=\varphi(u)v'$.}

Let $G$ be a subgroup of $\Aut(T)$. We denote by $\Stab_G(v)$ the \emph{stabilizer of the vertex $v$} and $\Stab_G(\level{n})$ the \emph{$n$\textsuperscript{th} level stabilizer}, that is, the pointwise stabilizer $\Stab_G(\level{n})\coloneqq\bigcap_{v\in\level{n}}\Stab_G(v)$.
More generally, if $X$ is a subset of vertices of $T$, the group $\Stab_G(X)$ is its pointwise stabilizer.
Other important subgroups are \emph{the rigid stabilizer of a vertex} $\Rist_G(v)\coloneqq\bigcap_{w\notin T_v}\Stab_G(w)$ which consists of elements acting trivially outside $T_v$ and \emph{the rigid stabilizer of a level} $\Rist_G(n)\coloneqq\prod_{v\in\level{n}}\Rist_G(v)=\langle\Rist_G(v)\,|\,v\in\level{n}\rangle$.

For $v$ a vertex of $T$ we have a natural homomorphism
\[
	\varphi_v\colon\Stab_{\Aut(T)}(v)\to\Aut(T_v)
\]
where $\varphi_v(G)=g_{|_{T_v}}$ is simply the restriction of $g$, i.e the action of $g$ on $T_v$.
The element $\varphi_v(G)$, is called the \emph{section of $g$ at $v$}.
For $G$ a subgroup of $\Aut(T)$, we will sometimes speak of the \emph{section of $G$ at $v$} and write $\varphi_v(G)$ as a shorthand for $\varphi_v\bigl(\Stab_G(v)\bigr)$.
It is trivial that the restriction of $\varphi_v$ to $\Rist_G(v)$ is an isomorphism onto its image.
\begin{definition}\label{Def:SelfRepl}
A subgroup $G$ of $\Aut(T)$ is \emph{self-similar} if for every vertex $v$ the section $\varphi_v(G)$ is, under the natural identification of $T_v$ to $T$,  a subgroup of $G$.
It is \emph{self-replicating} (also called \emph{fractal}) if for every vertex $v$ the section $\varphi_v(G)$ is, under the natural identification of $T_v$ to $T$, equal to $G$.
Following~\cite{MR4082048} we will say that $G$ is \emph{strongly self-replicating} if $\varphi_v\bigl(\Stab_G(\level{n})\bigr)=G$ for every vertex $v$ of level $n$.
\end{definition}
\begin{definition}
A subgroup $G$ of $\Aut(T)$ is \emph{weakly branch} if it acts transitively on $\level{n}$ for all $n$ and all the $\Rist_G(v)$ are infinite (equivalently, they are all non-trivial).
The group $G$ is \emph{branch} if it acts transitively on $\level{n}$ for all $n$ and all the $\Rist_G(n)$ have finite index in $G$.
Finally, $G$ is \emph{regular branch over a subgroup $K$} if it acts transitively on $\level{n}$ for all $n$ and is self-replicating, $K$ is finite index and for every vertex $v$ of the first level, $\varphi_v\bigl(\Stab_K(\level{1})\bigr)$ contains $K$ as a finite index subgroup.
\end{definition}
Regularly branch groups are branch, and branch groups are weakly branch.

A branch group $G$ has the \emph{congruence subgroup property} if for every finite index subgroup  $H$ there exists $n$ such that  $H$ contains the level stabilizer $\Stab_G(\level{n})$.
This is equivalent to the fact that the profinite topology on $G$ coincides with the restriction to $G$ of the natural topology of $\Aut(T)$.

The \emph{first Grigorchuk group} $\Grig$ acting on the $2$-regular rooted tree is probably the best-known and most studied branch group. This was the first example of a group of intermediate growth~\cite{MR764305}.
Among other properties, the group $\Grig$ is self-replicating, regularly branch (over an index $16$ subgroup denoted $K$), just infinite and has the congruence subgroup property.
See~\cite{MR1786869,MR2195454} for a formal definition, references and more details.

Other well-studied examples of branch groups are the \emph{Gupta-Sidki  $p$-groups} $G_p$, $p\geq 3$ prime, acting on the $p$-regular rooted tree~\cite{MR696534}.
These groups are self-replicating, regular branch, just infinite~\cite{MR759409} and have the congruence subgroup property~\cite{MR1899368}.
On the other hand, Gupta-Sidki groups have infinite width of associated Lie algebras, in contrast with $\Grig$, and it is not know if they have intermediate growth.
See~\cite{MR696534,MR759409,MR3513107} for a formal definition, references and more details.

Let us now introduce some further, less standard terminology that we are going to work with in this paper.

We shall say that two vertices $u$ and $v$ of $T$ are \emph{orthogonal} if the subtrees $T_u$ and $T_v$ do not intersect, that is, if both $v\nleq w$ and $w\nleq v$. A subset $U$ of vertices is called \emph{orthogonal} if it consists of pairwise orthogonal vertices. It is called a \emph{transversal} if every infinite geodesic ray from the root of the tree intersects $U$ in exactly one point. It is clear that a transversal is a finite set. Two subsets $U$ and $V$ of vertices are \emph{orthogonal} if every vertex of one set is orthogonal to every vertex of the other set.
\begin{definition}\label{Def:Diagonal}
Let  $U = (u_1,\dots,u_k)$ be an ordered orthogonal set.
Let $G\leq\Aut(T)$ and let $L$ be an abstract group. Suppose that there exists a family $(L_j)_{j=1}^k$ of finite index subgroups of $\varphi_{u_j}\bigl(\Rist_G(u_j)\bigr)$ that are all isomorphic to~$L$.
Let $\Psi=(\psi_1,\dots,\psi_k)$ be a $k$-uple of isomorphisms $\psi_j\colon L\to L_{j}$.
Then the quadruple $(U,L,(L_j)_{j=1}^k,\Psi)$ determines a \emph{diagonal subgroup} of~$G$
\[
	D\coloneqq\biggl\{g\in\prod_{j=1}^k\Rist_G(u_j)\bigg|\exists l\in L,\forall j:\varphi_{u_j}(g)=\psi_j(l)\biggr\}
\]
abstractly isomorphic to $L$. We say that $U$ is the \emph{support} of $D$.
\end{definition}
Observe that for the degenerate case where $U=\{u\}$ consists of only one vertex, the diagonal subgroups of $G$ supported on $\{u\}$ are exactly finite index subgroups of $\Rist_G(u)$.
\begin{example}\label{Ex:Grig}
The first Grigorchuk group $\Grig=\gen{a,b,c,d}$ is regular branch over $K=\gen{abab}^\Grig=\gen{abab,badabada,abadabad}$ which is a normal subgroup of index~$16$.
The diagonal subgroup $D$ depicted in Figure~\ref{Figure:Diag} is defined by $L\coloneqq K$ (seen as an abstract group), $U=\{000,01,10\}$, $3$ copies of $K$ living respectively in $\Aut(T_{000})$, $\Aut(T_{01})$ and $\Aut(T_{10})$, and the isomorphisms $\psi_i$, $i=1,2,3$, given  by the conjugation by $a$, $b$ and $c$.
That is, $\psi_1\colon K\to K\leq \Aut(T_{000})$ is defined by $\psi_1(g)=g^a$, hence the notation $K^a$ in Figure~\ref{Figure:Diag}; and similarly for $\psi_2$ and $\psi_3$.
Observe that in this example the subgroup $D$ is ``purely'' diagonal in the sense that each factor $K$ coincides with $\varphi_{u_j}\bigl(\Rist_G(u_j)\bigr)$ instead of only being a finite index subgroup of it.
\begin{figure}[htbp]
\centering
\includegraphics{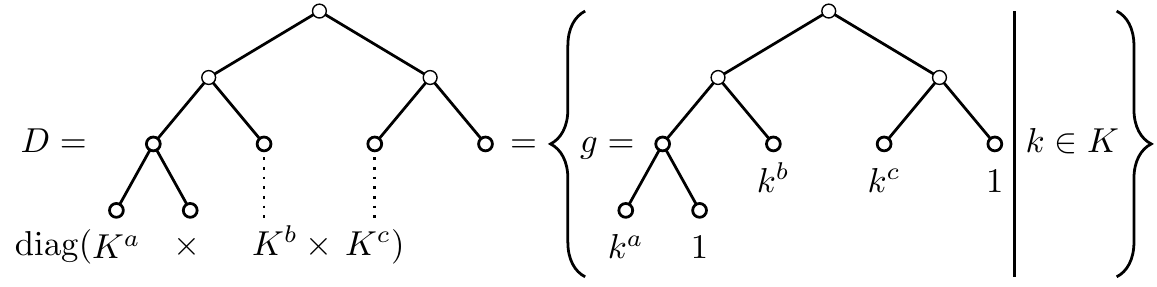}
\caption{A diagonal subgroup of $\Grig$.}
\label{Figure:Diag}
\end{figure}
\end{example}
\begin{definition}\label{Def:BlockStructureBranch}
Let $G\leq \Aut(T)$.
A \emph{block subgroup} of $G$ is a finite product $A=\prod_{i=1}^nD_i$ of diagonal subgroups such that the supports of the $D_i$s are pairwise orthogonal.
\end{definition}
Observe that since the supports of the $D_i$s are pairwise orthogonal, we have $D_i\cap D_j=\{1\}$ if $i\neq j$ and $\prod_{i=1}^nD_i=\gen{D_1,\dots,D_n}\leq G$.
\begin{example}
\label{example:block}
A picture describing a specific block subgroup of $\Grig$ is given in Figure~\ref{Figure:Block}.
Recall that $\Grig=\gen{a,b,c,d}$ is regular branch over the normal subgroup $K=\gen{abab}^\Grig$ and we have $K<B=\gen{b}^\Grig<\Grig$ with $[B:K]=2$ and $[\Grig:B]=8$.
Moreover, the section $\varphi_v\bigl(\Rist_G(v)\bigr)$ is equal to $B$ if $v\in\level{1}$ and to $K$ otherwise.

Here is a detailed explanation of this example.
We have $U_1=\{1\}$ and $D_1=\Rist_G(1)=\{1\}\times B$.
On the other hand, we have $U_2=\{000,001\}$, $L_{1}=K\leq\Aut(T_{000})$ and $L_{2}=K\leq\Aut(T_{001})$ with the isomorphisms $\psi_1=\id\colon K\to L_1$ and $\psi_2=\cdot^a\colon K\to L_2$ (the conjugation by $a$). This gives us $D_2=\setst{(g,aga,1,1,1,1,1,1)\in \Rist_{G}(000)\times \Rist_{G}(001)}{g\in K}$.
Finally, the block subgroup depicted in Figure~\ref{Figure:Block} is the product of $D_1$ and $D_2$ (these two subgroups have trivial intersection).
\end{example}
It follows from the definition that if $G$ is a finitely generated branch group and $A$ is a block subgroup of $G$, then $A$ is finitely generated and so is $H$ for every subgroup $A\leq H\leq G$ with $[H:A]<\infty$.
We will show that the converse also holds under some conditions.
In order to do that, we need two more definitions.
\begin{definition}\label{Definition:InductiveClass} Let $G\leq \Aut(T)$ be a self-similar group.
A family $\mathcal{X}$ of subgroups of $G$ is said to be \emph{inductive} if
\begin{enumerate}\renewcommand{\theenumi}{\Roman{enumi}}
\item\label{Item:DefSubgroupInduction1}
Both $\{1\}$ and $G$ belong to $\mathcal{X}$,
\item\label{Item:DefSubgroupInduction2}
If $H\leq L$ are two subgroups of $G$ with $[L:H]$ finite, then $L$ is in $\mathcal X$ if and only if $H$ is in $\mathcal X$,
\item\label{Item:DefSubgroupInduction3}
If $H$ is a finitely generated subgroup of $\Stab_G(1)$ and all first level sections of $H$ are in $\mathcal{X}$, then $H\in \mathcal{X}$.
\end{enumerate}
\end{definition}
\begin{definition}\label{Definition:SubgroupInduction}
A self-similar group $G$ has the \emph{subgroup induction property} if for any inductive class of subgroups $\mathcal X$, each finitely generated subgroup of $G$ is contained in $\mathcal X$.
\end{definition}
We are now able to state our main result.
\begin{theorem}\label{Thm:MainThm}
Let $G$ be a finitely generated self-similar branch group. Then the following are equivalent.
\begin{enumerate}
\item 
The group $G$ has the subgroup induction property,
\item
A subgroup $H$ of $G$ is finitely generated if and only if it contains a block subgroup $A$ with $[H:A]<\infty$,
\item
A subgroup $H$ of $G$ is finitely generated if and only if there exists $n$ such that for every $v\in \level{n}$ the section $\varphi_v\bigl(\Stab_H(\level{n})\bigr)$ is either trivial or has finite index in $\varphi_v\bigl(\Stab_G(v)\bigr)$.
\end{enumerate}
\end{theorem}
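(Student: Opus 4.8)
\subsection*{Proof proposal}

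The plan is to prove the three statements equivalent through the cycle $(1)\Rightarrow(2)\Rightarrow(3)\Rightarrow(1)$, isolating at the outset the single hard ingredient. The central device is the \emph{section map}: if $H\le\Stab_G(\level n)$, then $(\varphi_v)_{v\in\level n}$ embeds $H$ \emph{subdirectly} into $\prod_{v\in\level n}\varphi_v(H)$ (it surjects onto each factor by the very definition of the sections). The factors $\varphi_v(H)$, and more precisely the sections of the rigid stabilizers occurring inside them, are commensurable with $G$ itself, which brings the just infinite setting of Theorem~\ref{Thm:SubdirectBloc} into play. The conclusion of that theorem --- roughly, that a subdirect product of just infinite groups is, up to finite index, a direct product of diagonally embedded factors --- translates, via the orthogonality of the subtrees $T_v$, into exactly the block structure of Definition~\ref{Def:BlockStructureBranch}. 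I will call this translation the \emph{assembly step}; it produces a block subgroup of finite index out of section data, and it is the main obstacle.

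For $(1)\Rightarrow(2)$, the implication ``contains a block of finite index $\Rightarrow$ finitely generated'' is the elementary remark recorded just before the statement, so only the converse needs proof. I would run it through the subgroup induction property with the class
\[
	\mathcal X\coloneqq\setst{H\le G}{H\text{ contains a block }A\text{ with }[H:A]<\infty}.
\]
Axiom~\ref{Item:DefSubgroupInduction1} is immediate: $\{1\}$ and $G$ are blocks, the latter being the one-vertex diagonal $\Rist_G(\emptyset)$. Axiom~\ref{Item:DefSubgroupInduction2} is routine when passing to finite index \emph{over}groups, and in the other direction reduces to the sublemma that every finite index subgroup $B$ of a block $A=\prod_iD_i$ again contains a block of finite index; since the diagonal isomorphism identifies each $D_i$ with its defining group, $B\cap D_i$ is a finite index, hence again diagonal, subgroup of $D_i$ on the same support, and $\prod_i(B\cap D_i)$ is a block of finite index in $B$. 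The substance is Axiom~\ref{Item:DefSubgroupInduction3}: given $H\le\Stab_G(\level1)$ finitely generated with each $\varphi_v(H)\in\mathcal X$, the assembly step applied to $H\hookrightarrow\prod_{v\in\level1}\varphi_v(H)$ yields a block of finite index in $H$. Once $\mathcal X$ is inductive, the subgroup induction property forces every finitely generated subgroup into $\mathcal X$, which is (2).

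For $(2)\Rightarrow(3)$, the forward direction is a computation: statement (2) supplies a block $A=\prod_iD_i$ of finite index in the finitely generated $H$, and I choose $n$ larger than the depth of every support vertex \emph{and} large enough to kill the finite discrepancy between $H$ and $A$ on the subtrees orthogonal to all supports. For $v\in\level n$ below a support vertex, $\varphi_v\bigl(\Stab_A(\level n)\bigr)$ is finite index in $\varphi_v\bigl(\Stab_G(v)\bigr)$ by the definition of a diagonal subgroup together with strong self-replication; for $v$ orthogonal to all supports the section is trivial for such $n$; and passing from $A$ to $H$ alters these sections only by finite index. The reverse direction is again the assembly step: the section condition is precisely the hypothesis letting Theorem~\ref{Thm:SubdirectBloc} apply to $\Stab_H(\level n)\hookrightarrow\prod_v\varphi_v\bigl(\Stab_H(\level n)\bigr)$, producing a block of finite index and hence, by the elementary direction of (2), finite generation of $H$.

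Finally, $(3)\Rightarrow(1)$ is a clean induction needing no further geometry. Let $\mathcal X$ be an arbitrary inductive class and $H\le G$ finitely generated; by (3) fix $n$ witnessing the section condition and induct on $n$. If $n=0$, then $H$ is trivial or of finite index in $G$, so $H\in\mathcal X$ by Axioms~\ref{Item:DefSubgroupInduction1} and~\ref{Item:DefSubgroupInduction2}. If $n\ge1$, replace $H$ by its finite index subgroup $\Stab_H(\level1)\le\Stab_G(\level1)$; each first level section $\varphi_v\bigl(\Stab_H(\level1)\bigr)$ is finitely generated and, reading its own sections off those of $\Stab_H(\level n)$, witnesses the section condition with parameter $n-1$ (here one uses self-replication, so that all sections of $G$ are again $G$). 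By the inductive hypothesis each lies in $\mathcal X$, so Axiom~\ref{Item:DefSubgroupInduction3} places $\Stab_H(\level1)$, hence $H$, in $\mathcal X$. As $\mathcal X$ was arbitrary, $G$ has the subgroup induction property. The crux of the whole argument is thus concentrated in the assembly step, that is, in the passage through Theorem~\ref{Thm:SubdirectBloc}; everything else is bookkeeping with finite indices and the inductive axioms.
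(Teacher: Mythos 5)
Your overall architecture --- inductive-class bookkeeping driven by the subdirect-product theorem --- is in the same spirit as the paper's, but there is a genuine gap at exactly the point you yourself call the ``assembly step'', and it is not a matter of routine detail. Theorem~\ref{Thm:SubdirectBloc} is a statement about products of \emph{just infinite} groups, and Theorem~\ref{Thm:MainThm} does not assume $G$ just infinite; a branch group need not be just infinite, so this hypothesis must be produced, not presumed. In the paper it is produced by Proposition~\ref{Prop:JustInfinite}, quoted from~\cite{FL2019}: a finitely generated branch group with the subgroup induction property is torsion and just infinite. Your proposal never establishes (or even mentions) just infiniteness, so neither of your invocations of the assembly step is justified as written. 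Moreover, even granting that $G$ is just infinite, your formulation --- apply Theorem~\ref{Thm:SubdirectBloc} to $H\hookrightarrow\prod_v\varphi_v(H)$ --- does not apply as stated: the factors $\varphi_v(H)$ are only trivial-or-of-finite-index in $\varphi_v(G)$ (and, in your Axiom~III verification, merely virtually blocks), and a finite index subgroup of a just infinite group is in general \emph{not} just infinite (for instance $\Stab_\Grig(\level{1})$ surjects onto $\Grig$ with infinite kernel, so it is not just infinite although it has index $2$ in $\Grig$). This is precisely why the paper's Lemma~\ref{Lemma:ProofOfMainThm} first replaces $H$ by $H\cap\prod_{v\in X}\Rist_G(v)$, uses that sections of a just infinite branch group are again just infinite branch groups (\cite[Lemma~5.1]{L2019}), and only then invokes a (branch-group) variation of Theorem~\ref{Thm:SubdirectBloc} for factors commensurable with such groups. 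You flag the assembly step as ``the main obstacle'' and then treat it as available off the shelf; it is not.

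A secondary problem is that twice you lean on hypotheses the theorem does not grant: in $(2)\Rightarrow(3)$ you invoke ``strong self-replication'', and in $(3)\Rightarrow(1)$ you write ``here one uses self-replication, so that all sections of $G$ are again $G$'', whereas the theorem only assumes $G$ self-similar, i.e.\ $\varphi_v(G)\leq G$, possibly of infinite index. The paper's route through Proposition~\ref{Proposition:SubgroupInductionW} sidesteps this: its induction climbs a single fixed transversal, applying Axiom~III at each internal vertex, instead of re-embedding first level sections as subgroups of $G$ and re-applying statement~(3) to them --- which is exactly the move that forces you to assume $\varphi_v(G)=G$. To repair your argument you would need (i) a proof or citation that your standing hypothesis forces $G$ to be just infinite (Proposition~\ref{Prop:JustInfinite}); (ii) a reduction of the assembly step to factors that genuinely are just infinite, as in Lemma~\ref{Lemma:ProofOfMainThm}; and (iii) a version of $(3)\Rightarrow(1)$ that does not silently upgrade self-similarity to self-replication.
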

The proof of Theorem~\ref{Thm:MainThm} will be given in the end of Section~\ref{Section:ProofsBranch}.

The first Grigorchuk group~\cite[Theorem 3]{MR2009443}, as well as torsion GGS groups~\cite{FL2019} are known to possess the  subgroup induction property.
Hence it follows from Theorem~\ref{Thm:MainThm}: 
\begin{corollary}\label{cor:MainCor}
Let $\mathcal G$ be either the first Grigorchuk group, or a torsion GGS group.
Then a subgroup $H$ of $\mathcal G$ is finitely generated if and only if it contains a block subgroup $A$ with $[H:A]<\infty$.
\end{corollary}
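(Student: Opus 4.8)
The plan is to obtain the corollary as an immediate specialization of Theorem~\ref{Thm:MainThm}: the assertion to be proved is verbatim condition~(2) of that theorem applied to the two named families. So the whole argument reduces to checking that these groups meet the hypotheses of the theorem and that they satisfy its condition~(1).

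First I would confirm that both families fall within the scope of Theorem~\ref{Thm:MainThm}, i.e.\ that the first Grigorchuk group $\Grig$ and each torsion GGS group is a finitely generated self-similar branch group. For $\Grig$ this is recalled in Section~\ref{subsection:BranchGroups} and Example~\ref{Ex:Grig}: it is $3$-generated, self-replicating, and regular branch over $K$, hence branch. For the torsion GGS groups I would invoke the standard structure theory, according to which they are finitely generated, self-similar, and regular branch; here the only point requiring a little care is to ensure branchness holds for the entire torsion subfamily and not merely for the Gupta-Sidki groups, which follows once one notes that a torsion defining vector is necessarily non-constant.

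Next I would record that these groups have the subgroup induction property, which is condition~(1) of the theorem. This is exactly the content of the cited results: \cite[Theorem~3]{MR2009443} for $\Grig$ and \cite{FL2019} for the torsion GGS groups. With condition~(1) in hand, the implication (1)$\Rightarrow$(2) of Theorem~\ref{Thm:MainThm} yields precisely the desired characterization of the finitely generated subgroups of $\mathcal G$ as those containing a block subgroup of finite index (the reverse implication of this equivalence being in any case the easy direction already noted in general just before Definition~\ref{Definition:InductiveClass}).

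I do not expect any genuine obstacle in this deduction, because it is a pure specialization. All the difficulty has been front-loaded into Theorem~\ref{Thm:MainThm}---which itself rests on the subdirect-product result Theorem~\ref{Thm:SubdirectBloc}---and into the two external verifications of the subgroup induction property. Accordingly, the only thing I would double-check is the bookkeeping of the hypotheses, namely that every group under consideration is simultaneously finitely generated, self-similar, and branch, so that the theorem applies without restriction.
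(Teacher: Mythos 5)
Your proposal is correct and follows exactly the paper's own route: the corollary is stated there as an immediate consequence of Theorem~\ref{Thm:MainThm}, using \cite[Theorem 3]{MR2009443} for $\Grig$ and \cite{FL2019} for torsion GGS groups to supply the subgroup induction property. Your extra check that torsion GGS groups are genuinely branch (non-constant defining vector) is a sensible verification of the theorem's hypotheses, which the paper leaves implicit.
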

The property of a group to have all finitely generated subgroups closed in profinite topology is quite rare.
Such groups are called \emph{subgroup separable} (or \emph{LERF}, which stands for locally extensively residually finite).
It holds in particular for finite groups, finitely generated abelian groups, finitely generated free groups~\cite{MR32642}, surface groups~\cite{MR494062}, amalgamated product of two free groups over a cyclic subgroup~\cite{MR767102} and more generally, limit groups~\cite{MR2399104}.
In~\cite{MR1190361} it is proven that a subset of a free group which is a product of finitely many finitely generated subgroups is closed in profinite topology. This remarkable property is known (in finitely generated case) only for free groups.
We conjecture that every subset of $\Grig$ (respectively of $G_3$) which is a product of finitely many finitely generated groups is closed in the profinite topology. Our result may be considered as positive evidence towards this conjecture.
Indeed,  if $G\leq Aut(T)$ is a branch group with the congruence subgroup property, then every block subgroup of $G$ is closed in the profinite topology~\cite{L2019}.
Together with Theorem~\ref{Thm:MainThm} this implies the following.
\begin{theorem}\label{Thm:LERF}
Let $G$ be a finitely generated self-similar branch group with the congruence subgroup property.
If $G$ has the subgroup induction property, it is subgroup separable.
\end{theorem}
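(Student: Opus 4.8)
The plan is to combine the structural characterization supplied by Theorem~\ref{Thm:MainThm} with the cited fact that block subgroups of a branch group with the congruence subgroup property are closed in the profinite topology. Recall that $G$ is subgroup separable precisely when every finitely generated subgroup of $G$ is closed in the profinite topology, so it suffices to establish closedness for an arbitrary finitely generated subgroup $H\leq G$.

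First I would note that $G$, being a finitely generated self-similar branch group, satisfies the hypotheses of Theorem~\ref{Thm:MainThm}; since we additionally assume that $G$ has the subgroup induction property, statement~(2) of that theorem applies. Thus every finitely generated subgroup $H\leq G$ contains a block subgroup $A$ with $[H:A]<\infty$. Next, since $G$ is a branch group with the congruence subgroup property, the result from~\cite{L2019} guarantees that the block subgroup $A$ is closed in the profinite topology of $G$.

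It then remains to pass from the closedness of the finite-index subgroup $A$ to that of $H$. Because the profinite topology is a group topology, right translation by any fixed element is a homeomorphism of $G$, so each right coset $Ag$ is closed in $G$. Writing $H$ as the finite disjoint union $H=\bigsqcup_{i=1}^{m}Ag_i$ of the (right) cosets of $A$ in $H$, with $g_i\in H$, we exhibit $H$ as a finite union of subsets closed in $G$; hence $H$ is closed in $G$. This proves that $G$ is subgroup separable.

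The genuine content of the argument lies entirely in the two inputs---the equivalence of Theorem~\ref{Thm:MainThm} and the closedness of block subgroups---both of which are available to us, so I do not expect a serious obstacle at this stage. The remaining deduction is the elementary topological observation that a finite-index overgroup of a closed subgroup is itself closed; the only point requiring care is to record that the profinite topology is indeed a group topology, so that translations are homeomorphisms and finite unions of cosets of a closed subgroup remain closed.
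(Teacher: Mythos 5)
Your proof is correct and follows the same route as the paper: apply Theorem~\ref{Thm:MainThm} to get a finite-index block subgroup $A\leq H$, invoke the result of~\cite{L2019} (the paper cites its Lemma~6.7) that block subgroups are closed in the profinite topology when $G$ has the congruence subgroup property, and conclude that $H$ is closed. The only difference is that you spell out the elementary coset-union argument that a finite-index overgroup of a closed subgroup is closed, which the paper compresses into ``and so is $H$.''
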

As a corollary, we obtain another proof of the fact that both $\Grig$~\cite{MR2009443} and~$G_3$~\cite{MR3513107} are subgroup separable.

The first Grigorchuk group~\cite{MR2009443} and the Gupta–Sidki $3$-group~\cite{MR3513107} were the only groups known to have the subgroup induction property, until the very recent work~\cite{FL2019} of the second named author together with D.~Francoeur generalizing the results of~\cite{MR3513107} to all Gupta-Sidki $p$-groups, ($p\geq 3$ prime) and more generally to torsion GGS groups acting on the $p$-regular rooted tree.
%
%
%
%
%
%
%
%
%
%
\subsection{Subdirect products}
A subgroup $H\leq\prod_{i\in I}G_i$ that projects onto each factor is called a \emph{subdirect product}, written $H\subdirect \prod_{i\in I}G_i$.

Subdirect products have been used for example to represent many small perfect groups~\cite{MR1025760}.
On the other hand, understanding subdirect products is the first step in the comprehension of general subgroups of direct products and is therefore of interest in itself.

In this context, diagonal subgroups play an important role.
We need to introduce the notion of a diagonal subgroup in the abstract setting of a direct product which will be coherent with the notion of diagonal subgroups introduced in Definition~\ref{Def:Diagonal}.
\begin{definition}
Let $I$ be a set and let $(G_i)_{i\in I}$ be a collection of groups that are all isomorphic to a common group $G$.
We say that a subgroup $H$ of $\prod_{i\in I}G_i$ is \emph{diagonal} if there exists isomorphisms $\psi_i\colon G\to G_i$ such that
\[
H=\diag\Bigl(\prod_{i\in I}\psi_i(G)\Bigr)\coloneqq\setst{\bigl(\psi_i(g)\bigr)_{i\in I}}{g\in G}.
\]
It directly follows from the definition that diagonal subgroups of $\prod_{i\in I}G_i$ are examples of subdirect product.
\end{definition}

We will show in Section~\ref{Section:Subdirect} that for just infinite groups diagonal subgroups together with $G\times G$ itself are basically the only examples of subdirect products of $G\times G$.
\begin{lemma}\label{Lemma:subdirectProductJustInf}
Let $G_1$ and $G_2$ be two just infinite groups and $H\subdirect G_1\times G_2$.
Then either there exists $D_i\leq G_i$ of finite index such that $H$ contains $D_1\times D_2$, or $G_1$ and $G_2$ are isomorphic and $H=\diag\bigl(G_1\times\psi(G_1)\bigr)$ for some isomorphism $\psi\colon G_1\to G_2$.
\end{lemma}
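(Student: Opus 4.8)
The plan is to reduce the statement to Goursat's lemma combined with the defining property of just infinite groups, namely that their only normal subgroups are the trivial one and the subgroups of finite index.

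First I would record the Goursat data attached to the subdirect product $H$. Write $p_i\colon H\to G_i$ for the restrictions of the two coordinate projections; these are surjective precisely because $H$ is subdirect. Set
\[
	N_1\coloneqq\setst{g\in G_1}{(g,1)\in H},\qquad N_2\coloneqq\setst{g\in G_2}{(1,g)\in H}.
\]
Then $N_1\times\{1\}=H\cap(G_1\times\{1\})=\ker p_2$ is normal in $H$, and since $p_1$ is onto it follows that $N_1$ is normal in $G_1$; symmetrically $N_2\trianglelefteq G_2$. The key structural fact, which I would verify by a direct diagram chase, is that the rule $gN_1\mapsto g'N_2$ whenever $(g,g')\in H$ is a well-defined isomorphism $\theta\colon G_1/N_1\to G_2/N_2$, and that $H$ is exactly the preimage in $G_1\times G_2$ of the graph of $\theta$ under the quotient map $G_1\times G_2\to G_1/N_1\times G_2/N_2$. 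In particular $H\supseteq N_1\times N_2$.

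Next I would feed in the hypothesis. Because $G_1$ and $G_2$ are just infinite, each $N_i$ is either trivial or of finite index. Since $\theta$ is an isomorphism, $G_1/N_1\cong G_2/N_2$, so the two quotients are simultaneously finite or simultaneously infinite; and as $G_1,G_2$ are infinite, the options $N_i=\{1\}$ and $[G_i:N_i]<\infty$ are mutually exclusive. This leaves exactly two cases. If both $N_i$ have finite index, then taking $D_i\coloneqq N_i$ gives finite index subgroups with $D_1\times D_2=N_1\times N_2\subseteq H$, which is the first alternative. If instead $N_1=\{1\}$, then $G_2/N_2\cong G_1$ is infinite, forcing $N_2=\{1\}$ as well; now $\theta\colon G_1\to G_2$ is an honest isomorphism and $H$ equals its graph, that is $H=\diag\bigl(G_1\times\theta(G_1)\bigr)$, which is the second alternative with $\psi=\theta$.

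The argument is short and its only genuine content is the Goursat correspondence, so I do not anticipate a serious obstacle; the point that requires care is verifying that the two enumerated cases are exhaustive. This hinges on using the quotient isomorphism to transfer the just-infinite dichotomy from one factor to the other, so that a trivial $N_1$ propagates to a trivial $N_2$ and a finite index $N_1$ propagates to a finite index $N_2$. Without this transfer one might worry about mixed configurations (one $N_i$ trivial and the other of finite index) which in fact cannot occur.
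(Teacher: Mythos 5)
Your proof is correct and takes essentially the same approach as the paper: the paper invokes Goursat's lemma in its fibre-product form (writing $H$ as the fibre product of two epimorphisms $\psi_i\colon G_i\twoheadrightarrow Q$ onto a common quotient, so that $H\supseteq\ker\psi_1\times\ker\psi_2$) and then applies the just-infinite dichotomy by splitting on whether $Q$ is finite or infinite, which is exactly your case analysis on the kernels $N_1$ and $N_2$. The only cosmetic difference is that you re-derive the Goursat correspondence by hand instead of citing it as a lemma.
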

For products of just infinite groups with more than two factors, full products and diagonal subgroups are the building blocks of subdirect products.
We need one last definition before stating our result.
\begin{definition}\label{Def:BlockStructure}
Let $(G_i)_{i\in I}$, be a family of groups indexed by a set $I$.
A subgroup $H$ of $\prod_{i\in I}G_i$ is \emph{virtually diagonal by blocks} if there exists a set $\Delta$, 
abstract groups $(G_\alpha)_{\alpha\in \Delta}$, finite index subgroups $(L_\alpha\subset G_\alpha)_{\alpha\in \Delta}$ and subgroups $(D_\alpha)_{\alpha\in \Delta}$ of $\prod_{i\in I_\alpha} G_i$ such that I is partitioned into $I=\bigsqcup_{\alpha\in\Delta} I_\alpha$ with
\begin{enumerate}
\item For every $i$ in $I_\alpha$, the group $G_i$ is isomorphic to $G_\alpha$,
\item The subgroup $D_\alpha$ is a diagonal subgroup of $L_\alpha^{\abs{I_\alpha}}$,
\item $H$ contains $\prod_{\alpha\in\Delta} D_\alpha$ as a finite index subgroup.
\end{enumerate} 
\end{definition}
Observe that it follows from the definition that if $I_\alpha$ contains only one element, then $D_\alpha$ is  a finite index subgroup of $G_\alpha$.

We are now able to state our main result about subdirect products.
\begin{theorem}\label{Thm:SubdirectBloc}
Let $G_i$, $1\leq i\leq n$, be just infinite groups.
Suppose that at most two of them are virtually abelian.
Then all subdirect products of $\prod_{i=1}^nG_i$ are virtually diagonal by blocks.
\end{theorem}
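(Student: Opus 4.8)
The plan is to combine Lemma~\ref{Lemma:subdirectProductJustInf} (the two-factor case, which is the engine throughout) with a commutator argument, via a three-stage reduction. First I would organize the factors into blocks. Declare $i\approx j$ when $i=j$, or when the projection of $H$ to $G_i\times G_j$ is a diagonal subgroup (the second alternative of Lemma~\ref{Lemma:subdirectProductJustInf}); otherwise this projection contains a finite-index full product and I call the pair \emph{full}. The first genuine task is to verify that $\approx$ is an equivalence relation, where transitivity is the point: if the projections to $G_i\times G_j$ and to $G_j\times G_k$ are both diagonal, then the $j$-coordinate of every $h\in H$ determines its $i$- and $k$-coordinates through the two linking isomorphisms, so the projection to $G_i\times G_k$ lies in a diagonal; since $H$ is subdirect this projection surjects onto $G_i$, and a subgroup of a diagonal surjecting onto a factor is the whole diagonal, whence $i\approx k$. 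Writing $I=\bigsqcup_\alpha I_\alpha$ for the resulting partition, all $G_i$ with $i\in I_\alpha$ are mutually isomorphic to a common just infinite group $G_\alpha$, and chasing the linking isomorphisms from a fixed base index shows that $M_\alpha\coloneqq\pi_{I_\alpha}(H)$ is a full diagonal subgroup of $\prod_{i\in I_\alpha}G_i$, abstractly isomorphic to $G_\alpha$ and hence just infinite. At most two of the $M_\alpha$ are virtually abelian, since a block is virtually abelian only when all its factors are.

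Second, I would reduce to the case where no two blocks are diagonally related. Viewing $H$ as a subdirect product of the just infinite groups $(M_\alpha)_\alpha$, the dichotomy of Lemma~\ref{Lemma:subdirectProductJustInf} across distinct blocks forces every pair $M_\alpha,M_\beta$ to be full: were it diagonal, indices $i\in I_\alpha$, $j\in I_\beta$ would satisfy $i\approx j$, contradicting that they lie in different blocks. So it suffices to prove that if $H\subdirect\prod_\alpha M_\alpha$ with all pairwise projections full and at most two $M_\alpha$ virtually abelian, then each $H\cap M_\alpha$ has finite index in $M_\alpha$. Granting this, $H\cap M_\alpha$ is a diagonal subgroup of a finite-index power $L_\alpha^{\abs{I_\alpha}}$, the product $\prod_\alpha(H\cap M_\alpha)$ lies in $H$ with finite index (it has finite index in $\prod_\alpha M_\alpha\supseteq H$), and this is precisely the statement that $H$ is virtually diagonal by blocks.

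The third stage is the core step and the main obstacle. Each $H\cap M_\alpha$ is normal in $H$ and projects to a normal subgroup of the just infinite group $M_\alpha$, hence is trivial or finite index; the whole difficulty is excluding the trivial case. For a block $M_\alpha$ that is \emph{not} virtually abelian I would suppose $H\cap M_\alpha=1$, so that $\pi_{\hat\alpha}$ embeds $H$ as the graph of a surjection $f\colon A\to M_\alpha$ with $A=\pi_{\hat\alpha}(H)$. Fullness of the pair $(\beta,\alpha)$ makes $f$ map $A_\beta\coloneqq\{a\in A:\pi_\beta(a)=1\}$ onto a finite-index subgroup $D_\alpha$ of $M_\alpha$, and an iterated commutator computation yields $f\bigl(A_{\beta_1}\cap\dots\cap A_{\beta_k}\bigr)\supseteq\gamma_k(D_\alpha)$ for distinct $\beta_1,\dots,\beta_k$, because commutators of elements killing disjoint coordinates kill all those coordinates and $f$ intertwines commutators. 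Taking all indices $\beta\neq\alpha$ makes the intersection trivial, forcing $\gamma_{m-1}(D_\alpha)=1$; so $D_\alpha$ is nilpotent of finite index in the just infinite $M_\alpha$, and the center of the nilpotent normal core of $D_\alpha$ is a nontrivial normal, hence finite-index, abelian subgroup, making $M_\alpha$ virtually abelian, a contradiction.

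It remains to handle the at most two virtually abelian blocks, collected in a set $V$ with $\abs{V}\le2$. Since the non-abelian blocks are already settled, $K\coloneqq H\cap\prod_{\gamma\notin V}M_\gamma$ has finite index; linking each element of the projection $\pi_V(H)$ to its coset of $K$ defines a homomorphism from $\pi_V(H)$ to a finite group whose kernel is exactly $H\cap\prod_{\alpha\in V}M_\alpha$, which is therefore finite index, and a finite-index subgroup of a product meets each factor in a finite-index subgroup. Here the hypothesis $\abs{V}\le2$ is indispensable and sharp, as it is precisely what guarantees that $\pi_V(H)$ is finite index (two full factors give a finite-index product, whereas three do not): already the sum-zero subgroup of $\mathbb{Z}^3$ is subdirect with all pairwise projections full yet meets each factor trivially. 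The most delicate point to get right is thus the interplay in this core step between the commutator argument, which requires the non-abelian blocks, and the finite-index linking argument, which tolerates at most two abelian blocks.
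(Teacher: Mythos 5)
Your proof is correct, and its overall skeleton (the dependence equivalence relation, the partition into full diagonal blocks $M_\alpha$, and the observation that distinct blocks are pairwise full) coincides with the paper's Lemma~\ref{Lemma:SubdirectDependent} and the frame of its proof of Theorem~\ref{Thm:SubdirectBloc}; but your core step replaces the paper's key Lemma~\ref{Lemma:SubdirectIndependent} by a genuinely different argument. The paper proves that lemma by induction on the number of pairwise independent factors: it designates one non-virtually-abelian factor $G_1$, applies the inductive hypothesis to two co-rank-one subsets of indices to produce elements $(g,x,1,\dots,1)$ and $(h,1,y,1,\dots,1)$, takes a \emph{single} commutator $([g,h],1,\dots,1)$, invokes just-infiniteness to make the normal closure $\gen{[g,h]}^{G_1}$ of finite index, and then bootstraps via a reduction claim; the hypothesis that at most two factors are virtually abelian is used only to guarantee that a valid $G_1$ exists at each stage, and the virtually abelian factors are never treated separately. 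You instead argue by contradiction and without induction on the number of factors: triviality of $H\cap M_\alpha$ turns $H$ into the graph of a surjection $f\colon A\to M_\alpha$, and your \emph{iterated} commutator computation (whose one-step version --- elements killing disjoint coordinate sets have commutators killing the union --- is the same engine the paper uses once) yields $f(A_{\beta_1}\cap\dots\cap A_{\beta_k})\supseteq\gamma_k(D_\alpha)$, forcing a finite-index subgroup of $M_\alpha$ to be nilpotent; just-infiniteness then promotes the center of its normal core to a finite-index abelian subgroup, contradicting non-virtual-abelianness. This conceptual dividend (trivial intersection forces virtual nilpotency, hence virtual abelianness) is absent from the paper, as is your explicit finite-quotient linking argument for the at most two virtually abelian blocks, which makes completely transparent where that hypothesis enters; what the paper's induction buys in exchange is uniformity, since all factors, abelian or not, are swept up by one recursion. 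Two small points in your write-up deserve care, though neither is a gap: the finite-index subgroup $D_\alpha$ produced by fullness depends on $\beta$, so you should intersect the finitely many $D_\alpha^{(\beta)}$, $\beta\neq\alpha$, before running the lower-central-series induction; and the target of your linking homomorphism should be $\pi_{W}(H)/K$ with $W=\prod_{\gamma\notin V}M_\gamma$ rather than $W/K$, since $K=H\cap W$ is normalized by $\pi_W(H)$ but need not be normal in all of $W$ (the group $H$ need not project onto $W$) --- this changes nothing in the conclusion, as $[\pi_W(H):K]\leq[W:K]<\infty$.
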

The restriction on the number of virtually abelian factors is necessary, as demonstrated by Example~\ref{Ex:counterexample}.
On the other hand, it is not too restrictive in the sense that nearly all just infinite groups, and in particular all torsion just infinite groups, are not virtually abelian.
Indeed, McCarthy showed in~\cite{MR0237637} that if $G$ is a just infinite group, then it has a maximal abelian normal subgroup (either trivial or of finite index) which is equal to its Fitting subgroup\footnote{The Fitting subgroup $\Psi(G)$ is the subgroup generated by the nilpotent normal subgroups of $G$.
If any ascending chain of normal subgroups of $G$ stabilizes after a finite number of steps, then $\Psi(G)$ is the unique maximal normal nilpotent subgroup of $G$.
Indeed, in this case there exists $K_1,\dots,K_n$ normal nilpotent subgroups of $G$ such that $\Psi(G)=\gen{K_1,\dots,K_n}$, which is a normal nilpotent subgroup of $G$ by Fitting's theorem~\cite{Fitting1938}.}
 $\Psi(G)$ and is isomorphic to $\mathbf Z^n$ for some integer $n$.
Such a group $G$ is not virtually abelian if and only if $\Psi(G)=\{1\}$ which correspond to the case $n=0$.
Moreover, for every $n>0$, there exists only finitely many non-isomorphic just infinite groups with $\Psi(G)=\mathbf Z^n$~\cite[Proposition 9]{MR0237637}.
Finally, (weakly) branch groups are never virtually abelian~\cite[Proposition 10.4]{MR2035113}.

If in Theorem~\ref{Thm:SubdirectBloc} the $G_i$s are pairwise non-isomorphic, all the $I_\alpha$s have cardinality $1$ and there exist finite index subgroups $D_i\leq G_i$ such that $H$ contains $\prod_{i=1}^nD_i$.
This directly implies the following rigidity result on subdirect products of non-isomorphic groups.
\begin{corollary}
Let $G_i$, $1\leq i\leq n$, be pairwise non-isomorphic just infinite groups.
Suppose that at most two of them are virtually abelian.
Then every subdirect product of $\prod_{i=1}^nG_i$ is a finite index subgroup.
\end{corollary}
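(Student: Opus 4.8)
The plan is to deduce this directly from Theorem~\ref{Thm:SubdirectBloc}, of which it is essentially a specialization. First I would take an arbitrary subdirect product $H\subdirect\prod_{i=1}^nG_i$ and apply the theorem: since at most two of the $G_i$ are virtually abelian, its hypotheses are met, so $H$ is virtually diagonal by blocks. This furnishes a partition $\{1,\dots,n\}=\bigsqcup_{\alpha\in\Delta}I_\alpha$ together with the data $(G_\alpha,L_\alpha,D_\alpha)_{\alpha\in\Delta}$ of Definition~\ref{Def:BlockStructure}.

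The key step is to exploit pairwise non-isomorphism to force every block $I_\alpha$ to be a singleton. Indeed, the first condition in Definition~\ref{Def:BlockStructure} asserts that $G_i$ is isomorphic to $G_\alpha$ for every $i\in I_\alpha$. Hence if some block contained two distinct indices $i\neq j$, we would obtain $G_i\cong G_\alpha\cong G_j$, contradicting the assumption that the factors are pairwise non-isomorphic. Therefore $\abs{I_\alpha}=1$ for every $\alpha$, so $\Delta$ is in bijection with $\{1,\dots,n\}$, and by the observation recorded immediately after Definition~\ref{Def:BlockStructure} each $D_\alpha$ is then just a finite index subgroup $D_i\leq G_i$ of the corresponding factor.

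Finally I would conclude by an elementary counting argument. The third condition of Definition~\ref{Def:BlockStructure} gives $\prod_{i=1}^nD_i\leq H$ with $[H:\prod_{i=1}^nD_i]<\infty$. Since each $D_i$ has finite index in $G_i$ and the product has only finitely many factors, $\prod_{i=1}^nD_i$ has finite index in $\prod_{i=1}^nG_i$; combining this with the finiteness of $[H:\prod_{i=1}^nD_i]$ yields $[\prod_{i=1}^nG_i:H]<\infty$, which is precisely the assertion that $H$ is a finite index subgroup.

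I do not expect a genuine obstacle in this corollary, as all the substantive work is carried by Theorem~\ref{Thm:SubdirectBloc}. The only point requiring care, and the sole place where the pairwise non-isomorphism hypothesis enters, is the observation in the second paragraph that this hypothesis collapses every block $I_\alpha$ to a single index; once that is established the remainder is bookkeeping about finite indices in finite direct products.
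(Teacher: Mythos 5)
Your proof is correct and takes essentially the same route as the paper, which likewise specializes Theorem~\ref{Thm:SubdirectBloc}: pairwise non-isomorphism forces every block $I_\alpha$ to be a singleton, so $H$ contains a product $\prod_{i=1}^n D_i$ of finite index subgroups $D_i\leq G_i$ and hence has finite index in $\prod_{i=1}^n G_i$. (One cosmetic remark: in the last step the finiteness of $[H:\prod_{i=1}^n D_i]$ is not actually needed, since the containments $\prod_{i=1}^n D_i\leq H\leq\prod_{i=1}^n G_i$ together with $[\prod_{i=1}^n G_i:\prod_{i=1}^n D_i]<\infty$ already give the conclusion.)
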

%
%
%
%
%
%
%
%
\section{Block structure in products of just infinite groups}\label{Section:Subdirect}
One important tool for the study of subdirect products is the notion of fibre product.
Recall that if $\psi_1\colon G_1\twoheadrightarrow Q$ and $\psi_2\colon G_2\twoheadrightarrow Q$ are groups epimorphisms, their fibre product\footnote{This is the categorial fibre product, also called \emph{pullback}, of $\psi_1$ and $\psi_2$ in the category of groups.} is the subgroup $P\coloneqq\setst{(g_1,g_2)\in G_1\times G_2}{\psi_1(g_1)=\psi_2(g_2)}$ of $G_1\times G_2$.
Every fibre product is naturally isomorphic to a subdirect product.
The converse also holds.
More precisely, the Goursat Lemma~\cite{MR1508819} applied to subdirect products gives us
\begin{lemma}\label{Lemma:SubdirectFiber}
If $H\subdirect G_1\times G_2$ is a subdirect product, then there exists a quotient $Q$ of $H$ and epimorphisms $\psi_1\colon G_1\twoheadrightarrow Q$ and $\psi_2\colon G_2\twoheadrightarrow Q$ such that $H$ is the fiber product of $\psi_1$ and $\psi_2$.
\end{lemma}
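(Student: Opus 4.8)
This is a restatement of Goursat's lemma specialized to subdirect products, so the plan is to read off from $H$ the two normal subgroups that cut out the common quotient $Q$. First I would set
\[
N_1\coloneqq\setst{g_1\in G_1}{(g_1,1)\in H},\qquad N_2\coloneqq\setst{g_2\in G_2}{(1,g_2)\in H},
\]
the fibres of the two coordinate projections over the identity; these are subgroups of $G_1$ and $G_2$ respectively, and $N_1\times N_2\leq H$. Using that $H\subdirect G_1\times G_2$ projects \emph{onto} each factor, I would check that $N_1\trianglelefteq G_1$: given $g_1\in G_1$, subdirectness yields some $g_2$ with $(g_1,g_2)\in H$, and then for $n_1\in N_1$ the element $(g_1,g_2)(n_1,1)(g_1,g_2)^{-1}=(g_1n_1g_1^{-1},1)$ lies in $H$, so $g_1n_1g_1^{-1}\in N_1$. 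The symmetric argument gives $N_2\trianglelefteq G_2$, hence $N_1\times N_2\trianglelefteq G_1\times G_2$ and in particular $N_1\times N_2\trianglelefteq H$.

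Next I would construct the common quotient. Consider the two homomorphisms $q_i\colon H\to G_i/N_i$ obtained by composing the $i$-th coordinate projection with the canonical surjection $G_i\twoheadrightarrow G_i/N_i$; both are onto since $H$ projects onto $G_i$. The crucial observation is that both have kernel exactly $N_1\times N_2$: if $(g_1,g_2)\in H$ with $g_1\in N_1$, then $(g_1,1)\in H$, so $(1,g_2)=(g_1,1)^{-1}(g_1,g_2)\in H$, whence $g_2\in N_2$; and conversely $N_1\times N_2\leq\ker q_1$. This produces canonical isomorphisms $G_1/N_1\cong H/(N_1\times N_2)\cong G_2/N_2$. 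Setting $Q\coloneqq H/(N_1\times N_2)$ — a quotient of $H$, as required — these identifications turn the canonical projections into epimorphisms $\psi_1\colon G_1\twoheadrightarrow Q$ and $\psi_2\colon G_2\twoheadrightarrow Q$, normalized so that $\psi_1(g_1)=\psi_2(g_2)$ holds for every $(g_1,g_2)\in H$.

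It then remains to verify that $H$ equals the fibre product $P\coloneqq\setst{(g_1,g_2)\in G_1\times G_2}{\psi_1(g_1)=\psi_2(g_2)}$. The inclusion $H\subseteq P$ is immediate from the normalization above. For $P\subseteq H$, take $(g_1,g_2)$ with $\psi_1(g_1)=\psi_2(g_2)$; choosing $(g_1',g_2)\in H$ (by surjectivity of the second projection) one gets $\psi_1(g_1')=\psi_2(g_2)=\psi_1(g_1)$, so $g_1^{-1}g_1'\in N_1$, i.e. $(g_1'^{-1}g_1,1)\in H$, and therefore $(g_1,g_2)=(g_1',g_2)\,(g_1'^{-1}g_1,1)\in H$. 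I do not expect a genuine obstacle here: the only point requiring care — and the only place where subdirectness is truly used — is the verification that $q_1$ and $q_2$ share the kernel $N_1\times N_2$, which is precisely what makes $G_1/N_1$ and $G_2/N_2$ canonically the same group $Q$; the remaining inclusions are direct computations.
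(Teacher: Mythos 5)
Your proof is correct. Note that the paper itself offers no argument for this lemma: it simply records the statement as the specialization to subdirect products of Goursat's lemma, with a citation to Goursat (1889). What you have written is precisely the standard proof of that classical fact, so in substance you are supplying the argument the paper outsources to the reference. All the key verifications check out: normality of $N_1=\setst{g_1\in G_1}{(g_1,1)\in H}$ in $G_1$ genuinely uses subdirectness (conjugating $(n_1,1)$ by a preimage $(g_1,g_2)\in H$); the identity $\ker q_1=\ker q_2=N_1\times N_2$ is the crux that makes $G_1/N_1$ and $G_2/N_2$ canonically isomorphic to the single quotient $Q=H/(N_1\times N_2)$, and your derivation of it (from $(g_1,g_2)\in H$, $g_1\in N_1$ infer $(1,g_2)=(g_1,1)^{-1}(g_1,g_2)\in H$) is exactly right; and the factorization $(g_1,g_2)=(g_1',g_2)\,(g_1'^{-1}g_1,1)$ correctly closes the inclusion $P\subseteq H$. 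The only stylistic difference from the paper is that the paper treats this as a black box, whereas your version makes the construction of $Q$ and of $\psi_1,\psi_2$ explicit; that explicitness is harmless and arguably useful, since the same objects ($\ker\psi_i$ and the induced identifications) reappear in the proof of Lemma~\ref{Lemma:subdirectProductJustInf}.
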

Using this result, we are able to prove Lemma~\ref{Lemma:subdirectProductJustInf}.
\begin{proof}[Proof of Lemma~\ref{Lemma:subdirectProductJustInf}.]
By Lemma~\ref{Lemma:SubdirectFiber}, $H$ is the fibre product of $\psi_1\colon G_1\twoheadrightarrow Q$ and $\psi_2\colon G_2\twoheadrightarrow Q$ where $Q$ is a quotient of $G$.
It follows directly from the definition of a fibre product that $H$ contains the subgroup $\ker(\psi_1)\times\ker(\psi_2)$.
If $Q$ is finite, both $\ker(\psi_1)$ and $\ker(\psi_2)$ have finite index in $G_1$ and $G_2$, respectively, and the conclusion of the lemma holds.
If $Q$ is not finite, by just infinity, both $\ker(\psi_1)$ and $\ker(\psi_2)$ are trivial, and $Q$ is isomorphic to both $G_1$ and $G_2$.
But in this case, $\psi_1$ and $\psi_2$ are isomorphisms and we have
\begin{align*}
H&=\setst{(g,h)\in G_1\times G_2}{\psi_1(g)=\psi_2(h)}\\
&=\setst{(g,h)\in G_1\times G_2}{h=\psi_2^{-1}\circ\psi_1(g)}\\
&=\diag\bigl(G_1\times\psi_2^{-1}\circ\psi_1(G_1)\bigr).
\end{align*}
\end{proof}
\begin{remark}\label{Example:SubdirecNotDiag}
The conclusion of Lemma~\ref{Lemma:subdirectProductJustInf} is optimal in the sense that there exists a subdirect product $P\subdirect G\times G$ such that $P$ is neither diagonal nor equal to $G\times G$.
Indeed, let $G$ be any group that has two distinct subgroups of index~$2$, one of them being characteristic.\footnote{For example, one can take $G=\Grig$ the first Grigorchuk group, $H$ the stabilizer of the first level 
and $J$ one of the other $6$ index $2$ subgroups.}
Then there exists a subdirect product $P\subdirect G\times G$ which is neither a diagonal subgroup nor $G\times G$ itself.

More precisely, for any characteristic subgroup $H$ of index two, and any index two subgroup $J\neq H$, the fibre product $P$ of $G\to G/H$ and $G\to G/J$ is neither a diagonal subgroup nor $G\times G$ itself.
Indeed, we have
\[
	P\coloneqq \setst{(f,g)\in G\times G}{f \in H\iff g\in J}.
\]
Hence $P$ contains $H\times J$ which is an index $4$ subgroup of $G\times G$.
On the other hand, by definition, $P$ is not equal to $G\times G$ (for example, it does not contain $(f,1)$ for any $f$ outside $H$).
Since both projections of $P\subdirect G\times G$ are onto, $P$ strictly contains $H\times J$ and is of index $2$ in $G\times G$.
We claim that $P$ does not contain any diagonal subgroup $\diag\bigl(G\times\psi(G)\bigr)$.
Indeed, suppose that it does.
On one hand, $P$ contains $\diag\bigl(H\times \psi(H)\bigr)$.
On the other hand, $P$ contains $H\times \{1\}$ and therefore it also contains $\{1\}\times \psi(H)=\{1\}\times H$ since $H$ is characteristic.
But $P$ also contains $\{1\}\times J$.
In particular, $P$ contains both $H\times \{1\}$ and $\{1\}\times G$ and is of index $2$ in $G\times G$.
This implies that $P=H\times G$ which is not a subdirect product of $G\times G$ and we have reached the desired contradiction.
\end{remark}
In order to prove Theorem~\ref{Thm:SubdirectBloc} for subdirect products of $\prod_{i\in I}G_i$, we will introduce in Lemma~\ref{Lemma:SubdirectDependent} an equivalence relation on the indices $i\in I$.
In order to do that, we need some notation.
Recall that $\pi_{i}\colon\prod_{i\in I} G_i\to G_i$ is the canonical projection.
\begin{definition}
Let $H\subdirect \prod_{i\in I} G_i$ be a subdirect product.
For $J\subseteq I$ we define $H_J$ by
\[
	H_J\coloneqq\setst{g\in \prod_{i\in I} G_i}{\restr{g}{i}=1 \textnormal{ if } i\notin J\textnormal{ and } \exists h\in H\textnormal{ s.t. }\forall j\in J:\restr{h}{j}=\restr{g}{j}}.
\]

In particular, $H_i\coloneqq H_{\{i\}}\cong G_i$.

Two indices $i$ and $j$ are said to be \emph{dependent} if $i=j$ or if $i\neq j$, $H_i$ and $H_j$ are isomorphic and $H_{\{i,j\}}=\diag\bigl(H_i\times\psi(H_i)\bigr)$ for some isomorphism $\psi\colon H_i\to H_j$.
They are said to be \emph{independent} if $i\neq j$ and $H_{\{i,j\}}$ contains $D_i\times D_j$ where $D_i$, $D_j$ are finite index subgroups of $H_i$, $H_j$.
\end{definition}
Informally, $H_J$ is the projection of $H$ to $\prod_{i\in J} H_i$, but viewed as a subgroup of $\prod_{i\in I} H_i$.

Note that, in general, being independent is not the negation of being dependent, see Example~\ref{Ex:counterexample}.
However, it follows from Lemma~\ref{Lemma:subdirectProductJustInf} that this is the case when the factors are just infinite:
\begin{corollary}\label{cor:dichotomy}
If  $H\subdirect \prod_{i\in I} G_i$ is a subdirect product with all the $G_i$ just infinite, then for all $i$ and $j$, either they are dependent or they are independent.
\end{corollary}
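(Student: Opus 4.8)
The plan is to deduce this dichotomy directly from Lemma~\ref{Lemma:subdirectProductJustInf} by applying it to a suitable two-factor subdirect product built from $H$. First I would fix distinct indices $i$ and $j$ (the case $i=j$ being dependent by definition) and consider the projection $\pi_{\{i,j\}}(H)\leq G_i\times G_j$, where $\pi_{\{i,j\}}$ denotes the coordinate projection onto the two factors indexed by $i$ and $j$. Since $H$ is a subdirect product of $\prod_{k\in I}G_k$, it surjects onto each $G_k$, and in particular $\pi_{\{i,j\}}(H)$ surjects onto both $G_i$ and $G_j$; hence $\pi_{\{i,j\}}(H)\subdirect G_i\times G_j$. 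Under the canonical identification of $H_{\{i,j\}}$ with this projection (as noted in the informal remark following the definition, $H_J$ is the projection of $H$ onto $\prod_{k\in J}H_k$ viewed inside the ambient product), and recalling $H_i\cong G_i$, $H_j\cong G_j$, the subgroup $H_{\{i,j\}}$ corresponds exactly to a subdirect product of $G_i\times G_j$.

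Next I would invoke Lemma~\ref{Lemma:subdirectProductJustInf}, whose hypotheses are met because $G_i$ and $G_j$ are just infinite. The lemma gives a clean dichotomy: either there exist finite index subgroups $D_i\leq G_i$ and $D_j\leq G_j$ with $D_i\times D_j\leq \pi_{\{i,j\}}(H)$, or $G_i$ and $G_j$ are isomorphic and $\pi_{\{i,j\}}(H)=\diag\bigl(G_i\times\psi(G_i)\bigr)$ for some isomorphism $\psi\colon G_i\to G_j$. Translating back through the identifications $H_i\cong G_i$ and $H_j\cong G_j$, the first alternative says precisely that $H_{\{i,j\}}$ contains a product of finite index subgroups of $H_i$ and $H_j$, which is the definition of $i$ and $j$ being independent. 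The second alternative says that $H_i$ and $H_j$ are isomorphic and $H_{\{i,j\}}=\diag\bigl(H_i\times\psi(H_i)\bigr)$, which is exactly the definition of $i$ and $j$ being dependent. Thus for every pair $i\neq j$ one of the two alternatives holds, establishing the corollary.

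I do not expect any serious obstacle here, since the statement is essentially a restatement of Lemma~\ref{Lemma:subdirectProductJustInf} after the correct identifications. The only point requiring mild care is the bookkeeping: one must verify that the two-factor projection $\pi_{\{i,j\}}(H)$ genuinely corresponds to $H_{\{i,j\}}$ under the isomorphisms $H_i\cong G_i$, so that the two alternatives of the lemma match the definitions of dependent and independent verbatim. In particular, the diagonal isomorphism $\psi\colon G_i\to G_j$ produced by the lemma transports to an isomorphism $H_i\to H_j$, and the finite index subgroups $D_i,D_j$ transport to finite index subgroups of $H_i,H_j$, so both translations are faithful. This is the step that deserves explicit mention in the write-up, but it is routine rather than difficult.
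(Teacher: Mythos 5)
Your proposal is correct and matches the paper's own reasoning: the paper gives no separate proof of Corollary~\ref{cor:dichotomy}, presenting it exactly as you do, namely as an immediate application of Lemma~\ref{Lemma:subdirectProductJustInf} to $H_{\{i,j\}}$ viewed as a subdirect product of $H_i\times H_j\cong G_i\times G_j$, with the two alternatives of the lemma translating verbatim into the definitions of independent and dependent. The bookkeeping step you flag (identifying $\pi_{\{i,j\}}(H)$ with $H_{\{i,j\}}$) is indeed routine and is precisely what the paper's informal remark after the definition of $H_J$ is meant to license.
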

We will now show that being dependent is an equivalence relation and that, for products of just infinite non virtually abelian groups, if $J\subseteq I$ is a finite subset of indices that are pairwise independent, then all indices of $J$ are ``simultaneously independent''.
\begin{lemma}\label{Lemma:SubdirectDependent}
Let $H\subdirect \prod_{i\in I} G_i$ be a subdirect product.
Being dependent is an equivalence relation.
Moreover, if all vertices of $J\subseteq I$ are pairwise dependent, then $H_{J}$ is diagonal.
\end{lemma}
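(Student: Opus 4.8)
The plan is to transport everything to the level of the coordinate projections of $H$. Writing $\pi_J\colon\prod_{i\in I}G_i\to\prod_{j\in J}G_j$ for the projection, the map $g\mapsto(\restr{g}{j})_{j\in J}$ is a group isomorphism from $H_J$ onto $\pi_J(H)$, and under this identification ``$H_{\{i,j\}}$ is diagonal'' is literally ``$\pi_{\{i,j\}}(H)$ is diagonal.'' So I work with the $\pi_J(H)$ throughout. Reflexivity is built into the definition ($i=i$). Symmetry is immediate: if $\pi_{\{i,j\}}(H)=\diag\bigl(H_i\times\psi(H_i)\bigr)$ for an isomorphism $\psi\colon H_i\to H_j$, then the very same subgroup equals $\diag\bigl(H_j\times\psi^{-1}(H_j)\bigr)$, which witnesses $j$ dependent on $i$.

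The heart of the matter is transitivity, and the point I expect to be the main obstacle is that $H_{\{i,j\}}$ and $H_{\{j,k\}}$ cannot be combined directly: each records only a two-coordinate projection of $H$, and a priori the ``$j$-entries'' appearing in the two are produced by unrelated elements of $H$. The fix is to pass to the joint projection. Suppose $i,j,k$ are distinct with $\pi_{\{i,j\}}(H)=\diag\bigl(H_i\times\psi(H_i)\bigr)$ and $\pi_{\{j,k\}}(H)=\diag\bigl(H_j\times\phi(H_j)\bigr)$, and set $P\coloneqq\pi_{\{i,j,k\}}(H)\leq H_i\times H_j\times H_k$. Since projections compose, the further projection of $P$ to the $\{i,j\}$-coordinates is exactly $\pi_{\{i,j\}}(H)$, and similarly for $\{j,k\}$. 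Hence any $(a,b,c)\in P$ satisfies $b=\psi(a)$ and $c=\phi(b)=\phi\bigl(\psi(a)\bigr)$, so every element of $P$ has the form $\bigl(a,\psi(a),\phi(\psi(a))\bigr)$; and because $H$ is subdirect, $P$ surjects onto the $i$-coordinate $H_i$, so conversely all such triples occur. Thus $P=\setst{\bigl(a,\psi(a),\phi(\psi(a))\bigr)}{a\in H_i}$ is fully diagonal, and projecting it to the $\{i,k\}$-coordinates gives $\pi_{\{i,k\}}(H)=\diag\bigl(H_i\times(\phi\circ\psi)(H_i)\bigr)$, which is precisely the statement that $i$ and $k$ are dependent.

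For the final assertion, fix $J\subseteq I$ with all indices pairwise dependent; I may assume $J\neq\emptyset$ (otherwise $H_J$ is trivial). Choose a base index $j_0\in J$. Pairwise dependence yields, for each $j\in J$, an isomorphism $\psi_j\colon H_{j_0}\to H_j$ (with $\psi_{j_0}=\id$) such that $\pi_{\{j_0,j\}}(H)=\diag\bigl(H_{j_0}\times\psi_j(H_{j_0})\bigr)$. Exactly as above, for any $(a_l)_{l\in J}\in\pi_J(H)$ the projection to each pair $\{j_0,j\}$ forces $a_j=\psi_j(a_{j_0})$, so the element is determined by its $j_0$-coordinate; and subdirectness makes that coordinate surject onto $H_{j_0}$. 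Therefore $\pi_J(H)=\setst{(\psi_j(g))_{j\in J}}{g\in H_{j_0}}=\diag\bigl(\prod_{j\in J}\psi_j(H_{j_0})\bigr)$, and transporting back through $H_J\cong\pi_J(H)$ shows $H_J$ is diagonal. Note that neither statement uses any hypothesis beyond subdirectness; in particular just-infiniteness is not needed here. The only genuinely delicate point is the bookkeeping of identifications, namely checking that projecting a diagonal pair onto its two factors and re-reading it through the isomorphism $H_J\cong\pi_J(H)$ is compatible with the abstract notion of a diagonal subgroup.
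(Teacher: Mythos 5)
Your proof is correct and follows essentially the same route as the paper: both rest on the observation that dependence with a base index $j_0$ forces every element of $H$ to have each coordinate determined by its $j_0$-coordinate via the witnessing isomorphisms, which together with subdirectness gives the diagonal description of $H_J$, and both obtain transitivity by passing to the three-index set $\{i,j,k\}$ (with base $j$) and projecting onto $\{i,k\}$. The differences are purely presentational: you prove transitivity first and the diagonal claim second (the paper does the reverse), and you spell out the identification $H_J\cong\pi_J(H)$ and the reflexivity/symmetry checks that the paper dismisses as obvious.
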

\begin{proof}
Let $J\subseteq I$ be a subset. Suppose that there exists $j_0\in J$ such that every $j\in J$ is dependent with $j_0$.
Then for all $j\in J$ we have an isomorphism $\psi_j\colon G_{j_0}\to G_j$ such that for every $g$ in $H$ we have $\restr{g}{j}=\psi\bigl(\restr{g}{j_0}\bigr)$.
It directly follows that $H_J=\setst{\bigl(\psi_i(g)\bigr)_{i\in J}}{g\in G_j}=\diag\Bigl(\prod_{i\in J}\psi_i(G_j)\Bigr)$.
We hence have proved the second assertion of the lemma.

On the other hand, by taking $J=\{i,j,k\}$ with $(i,j)$ and $(j,k)$ dependent, the above implies (for $j_0=j$) that $i$ and $k$ are dependent.
That is, being dependent is a transitive relation, which is obviously symmetric and reflexive.
\end{proof}
\begin{lemma}\label{Lemma:SubdirectIndependent}
Let $H\subdirect \prod_{i\in I} G_i$ with all the $G_i$s just infinite and such that at most two of them are virtually abelian.
Let $J\subseteq I$ be a finite subset of pairwise independent indices.
Then there exist finite index subgroups $D_j\leq G_j$, $j\in J$, such that $H_{J}$ contains $\prod_{j\in J}D_j$.
\end{lemma}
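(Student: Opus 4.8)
The plan is to argue by induction on $\abs{J}$, at each step peeling off a single index whose factor is not virtually abelian. The base cases $\abs J\le 2$ are immediate: for $\abs J=1$ there is nothing to prove, and for $\abs J=2$ the assertion is exactly the definition of independence. For the inductive step I assume $\abs J\ge 3$. Since at most two of the $G_i$ are virtually abelian, there is an index $k\in J$ with $G_k$ not virtually abelian. I would write $J'=J\setminus\{k\}$ and set $P\coloneqq H_{J'}\le\prod_{j\in J'}G_j$. As $J'$ is again pairwise independent and contains at most two virtually abelian factors, the induction hypothesis gives $\prod_{j\in J'}D'_j\subseteq P$ with $D'_j\le G_j$ of finite index; in particular $P$ has finite index in $\prod_{j\in J'}G_j$.

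Next I would use a fibre product decomposition. Since $H_J$ projects onto $P$ and onto $G_k$, it is a subdirect product $H_J\subdirect P\times G_k$, and Lemma~\ref{Lemma:SubdirectFiber} presents it as the fibre product of epimorphisms $\alpha\colon P\twoheadrightarrow Q$ and $\beta\colon G_k\twoheadrightarrow Q$. If $Q$ is finite, the lemma follows quickly: $\ker\beta=H_J\cap G_k$ has finite index in $G_k$, while $\ker\alpha=H_J\cap\prod_{j\in J'}G_j$ has finite index in $P$, hence in $\prod_{j\in J'}G_j$; a finite index subgroup of a direct product contains the product of its intersections with the factors, so $\ker\alpha$ contains $\prod_{j\in J'}D_j$ with each $D_j\le G_j$ of finite index. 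The subgroups $D_j$ and $\ker\beta$ are supported on pairwise disjoint coordinates and lie in $H_J$, so their product is the desired finite index subgroup $\prod_{j\in J}D_j$.

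The main obstacle, and the place where the hypotheses really enter, is to exclude the case $Q$ infinite. First I would record an auxiliary fact: if $G$ is just infinite but not virtually abelian, then the centralizer in $G$ of any finite index subgroup is trivial. Passing to the normal core reduces this to a finite index normal subgroup $M$, and a nontrivial $C_G(M)$ would have finite index by just infinity, forcing $Z(M)\supseteq C_G(M)\cap M$ to have finite index in $M$ and hence making $G$ virtually abelian. Now suppose $Q$ is infinite. As $G_k$ is just infinite, $\beta$ is then an isomorphism, so, identifying $Q$ with $G_k$, the group $H_J$ is the graph of a surjection $f\coloneqq\beta^{-1}\circ\alpha\colon P\twoheadrightarrow G_k$. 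For $j\in J'$ I put $P_j\coloneqq P\cap G_j$ and $P^{(j)}\coloneqq\setst{p\in P}{\restr{p}{j}=1}$; these are supported on disjoint coordinate sets and therefore commute, so their images $B_j\coloneqq f(P_j)$ and $B'_j\coloneqq f(P^{(j)})$ commute in $G_k$.

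Finally I would derive the contradiction. Independence of $k$ and $j$ provides finite index subgroups $E_j\le G_j$ and $E_k\le G_k$ with $E_j\times E_k\subseteq H_{\{j,k\}}$; since $H_{\{j,k\}}$ is the image of the graph of $f$ under projection to the coordinates $\{j,k\}$, the inclusion $\{1\}\times E_k\subseteq H_{\{j,k\}}$ says exactly that every element of $E_k$ is $f(p)$ for some $p\in P$ with $\restr{p}{j}=1$, i.e.\ $f(P^{(j)})\supseteq E_k$. Thus each $B'_j$ has finite index in $G_k$. On the other hand $\prod_{j\in J'}P_j$ has finite index in $P$, so $f\bigl(\prod_{j\in J'}P_j\bigr)=\prod_{j\in J'}B_j$ has finite index in $G_k$ and is in particular nontrivial; hence $B_{j_1}\neq\{1\}$ for some $j_1\in J'$. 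But then $B_{j_1}$ is a nontrivial subgroup centralizing the finite index subgroup $B'_{j_1}$ of the non virtually abelian just infinite group $G_k$, contradicting the auxiliary fact. Therefore $Q$ is finite, and the finite case treated above completes the induction.
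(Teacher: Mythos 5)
Your proof is correct, but it takes a genuinely different route from the paper's, which never invokes Goursat's lemma in the inductive step. The paper instead applies the induction hypothesis to the two coordinate-deleted subsets $J\setminus\{2\}$ and $J\setminus\{3\}$ to produce elements $(g,x,1,\dots,1)$ and $(h,1,y,1,\dots,1)$ of $H_J$ with $g,h$ ranging over a finite index subgroup $A\leq G_1$ of a non virtually abelian factor; the commutator $([g,h],1,\dots,1)$ then lies in $H_J$, conjugation puts the normal closure $\gen{[g,h]}^{G_1}\times\{1\}\times\dots\times\{1\}$ inside $H_J$, and just infinity makes this slice of finite index as soon as $g,h$ are chosen non-commuting; a separate claim (a third application of the induction hypothesis) upgrades a single finite-index slice to the full product $\prod_{j\in J}D_j$. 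You instead peel off one non virtually abelian coordinate $k$, regard $H_J\subdirect P\times G_k$ with $P=H_{J\setminus\{k\}}$ of finite index in $\prod_{j\in J\setminus\{k\}}G_j$ by induction, and run the fibre-product dichotomy of Lemma~\ref{Lemma:SubdirectFiber} on the common quotient $Q$: the finite case falls to the two kernels, and the infinite case --- where $H_J$ becomes the graph of a surjection $f\colon P\twoheadrightarrow G_k$ --- is excluded by combining pairwise independence of the pairs $(j,k)$ with your auxiliary fact that in a non virtually abelian just infinite group the centralizer of a finite index subgroup is trivial. I checked the details (the identification of $H_{\{j,k\}}$ with the projection of the graph, the finiteness of the index of $\prod_j P_j$ in $P$, and the auxiliary centralizer fact via the normal core) and they are sound. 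Comparing the two: the paper's commutator trick is more elementary and self-contained; your argument reuses the Goursat machinery already behind Lemma~\ref{Lemma:subdirectProductJustInf}, needs only one application of the induction hypothesis per step, and the centralizer lemma pinpoints exactly where virtual abelianness would break the statement --- in Example~\ref{Ex:counterexample} every subgroup centralizes every finite index subgroup, which is precisely the configuration your contradiction rules out.
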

\begin{proof}
The proof is done by induction on the cardinality of $J$.
If $\abs J=1$, there is nothing to prove.
If $\abs J=2$, this is Lemma~\ref{Lemma:subdirectProductJustInf}.
Now, let $J\subseteq I$ be of cardinality $d$ with $d\geq 3$.
Up to renaming the indices, we may suppose that $J=\{1,\dots, d\}$.
We can moreover suppose that $G_1$ is not virtually abelian, since $n\geq 3$ and at most two of the $G_i$s are virtually abelian.
We claim that the conclusion of the lemma holds if there exist an index $i$  and a finite index subgroup $D_i$ of $H_i$ such that $H_J$ contains $\{1\}\times\dots \times D_i\times\dots \times \{1\}$.
Indeed, let $\tilde H_{J\setminus\{i\}}$ be the subgroup of $H_{J\setminus\{i\}}$ consisting of projections of elements $g$ of $H$ such that $\restr{g}{i}$ is in $D_i$.
Formally, $\tilde H_{J\setminus\{i\}}\coloneqq H_{J\setminus\{i\}}\cap (D_i\times\prod_{j\in J\setminus\{i\}}H_j)$. 
Since $D_i$ has finite index in $H_i$, the subgroup $\tilde H_{J\setminus\{i\}}$ has finite index in $H_{J\setminus\{i\}}$ and, by induction hypothesis, for every $j\in J\setminus\{i\}$ it contains $\tilde D_j$, a finite index subgroup of $H_j$.
On the other hand, $H_J$ contains $D_i\times \tilde H_{J\setminus\{i\}}$, which finishes the proof of the claim.

The subgroup $H_J$ projects onto $H_{\{1,3\dots, d\}}$ which, by induction hypothesis, contains $D_1\times D_3 \dots \times D_{d}$ for some finite index subgroups of $H_i$, $i\neq 2$.
In particular, for every $1\neq g$ in $D_1$, there exist $x$ in $H_2=G_2$ such that $(g,x,1,\dots,1)$ is in $H_J$.
By doing the same argument for $H_{\{1,2,4,\dots, d\}}$ we obtain a finite index subgroup $A$ of $G_1$ such that for every $g$ and $h$ in $A$, the group $H_J$ contains both $(g,x,1\dots,1)$ and $(h,1,y,1\dots,1)$ for some $x$ and $y$.
By taking the commutator and then conjugating by elements of $H_J$, we have that for every $g$ and $h$ in $A$, the subgroup $\gen{[g,h]}^{G_1}\times\{1\}\dots\times\{1\}$ is contained in $H_J$. 
As soon as $[g,h]$ is not trivial, $\gen{[g,h]}^{G_1}$ is a non-trivial normal, and hence finite index, subgroup of $G_1$ and we are done.
Since $G_1$ is not virtually abelian, it is always possible to find $g$ and $h$ in $A$ that do not commute.
\end{proof}
The following example clarifies the conditions imposed in Lemma~\ref{Lemma:SubdirectIndependent} and Theorem~\ref{Thm:SubdirectBloc}.
\begin{example}\label{Ex:counterexample}
Let $G$ be an abelian group (for example $G=\mathbf Z$) and $H_n\coloneqq\{(g_1,\dots,g_n)\in G^n|\sum_i g_i=0\}$.
Then $H_n$ is a subdirect product of $G^n$ such that all indices are pairwise independent, but not simultaneously independent as soon as $n\geq3$.
Moreover, if $n\geq 3$ then $H_n$ is not virtually diagonal by block.
In particular, $H_n$ satisfies neither the conclusion of Lemma~\ref{Lemma:SubdirectIndependent}, nor the conclusion of Theorem~\ref{Thm:SubdirectBloc}.

On the other hand, $H_n$ can be viewed as a subdirect product of $G_1=G$ and $G_2=G^{n-1}$. With this identification and for $n\geq 3$, the indices $1$ and $2$ are neither independent nor dependent.
\end{example}

We now have all the ingredients necessary to prove Theorem~\ref{Thm:SubdirectBloc}.
\begin{proof}[Proof of Theorem~\ref{Thm:SubdirectBloc}.]
Let $G_i$, $1\leq i\leq n$, be just infinite groups such that at most two of them are virtually abelian.
Let $H\subdirect\prod_{i=1}^n G_i$.
By Lemma~\ref{Lemma:SubdirectDependent}, there exists an integer $d$ and a partition $\{1,\dots,n\}=I_1\sqcup\ldots\sqcup I_d$ such that the $I_\alpha$s are equivalence classes of dependent indices.
In particular, $H$ is a subdirect product of $\prod_{\alpha=1}^dH_{I_\alpha}$ and $H_{I_\alpha}\subdirect\prod_{i\in I_\alpha}G_i$ is a diagonal subgroup.
Let $i_\alpha\coloneqq\min\{i\in I_\alpha\}$ and $G_\alpha\coloneqq G_{i_\alpha}$.
Then all the $G_i$s, $i\in I_\alpha$ are isomorphic to $G_\alpha$ and so is $H_{I_\alpha}$.

On the other hand, by viewing $H$ as a subdirect product of $\prod_{\alpha=1}^dH_{I_\alpha}$, we have that the indices $(I_\alpha)_{\alpha=1}^d$ are pairwise independent.
By Lemma~\ref{Lemma:SubdirectIndependent}, for every $1\leq \alpha\leq d$, the group $H$ contains $D_\alpha$, a finite index subgroup of $H_{I_\alpha}$.
Since $H_{I_\alpha}$ is a diagonal subgroup of $\prod_{i\in I_\alpha}G_i\cong G_{\alpha}^{\abs{I_\alpha}}$, the subgroup $D_\alpha$ is a diagonal subgroup of $L_\alpha^{\abs{I_\alpha}}$ for some finite index subgroup $L_\alpha$ of $G_{\alpha}$.
It is clear that $\prod_{\alpha=1}^dD_\alpha$ has finite index in $H$, and therefore $H$ is a virtually diagonal by block subgroup of $\prod_{i=1}^n G_i$.
\end{proof}
Finally, we 
show that the set of virtually diagonal by block subgroups is closed upon taking finite index subgroups.
\begin{lemma}\label{Lemma:SubVirtDiag}
Let $H$ be a virtually diagonal by blocks subgroup of a finite product of groups $\prod_{i=1}^nG_i$.
Then every finite index subgroup $K$ of $H$ is also a virtually diagonal by blocks subgroup.
\end{lemma}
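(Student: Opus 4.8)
The plan is to exploit the defining feature of a virtually diagonal by blocks subgroup: $H$ contains $D\coloneqq\prod_{\alpha\in\Delta}D_\alpha$ as a finite index subgroup, and I will show that $K$ inherits a block product of the same shape. Since the index set $I=\{1,\dots,n\}$ is finite, the partition $I=\bigsqcup_{\alpha\in\Delta}I_\alpha$ has finitely many nonempty parts, so $\Delta$ is finite. First I would set $N\coloneqq K\cap D$. As $D$ has finite index in $H$ and $K\leq H$, the intersection $N$ has finite index in $K$; and as $K$ has finite index in $H$, this same $N$ has finite index in $D$. So it suffices to locate inside $N$ a finite index block product, and the problem reduces to understanding finite index subgroups of $D=\prod_\alpha D_\alpha$.

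The key structural step is that although a finite index subgroup $N\leq D$ need not itself split as a direct product, it always contains a genuine block product of finite index. For each $\alpha$, regard $D_\alpha$ as the subgroup of $D$ supported on the coordinates $I_\alpha$, and put $N_\alpha\coloneqq N\cap D_\alpha$. Since $N$ has finite index in $D$, each $N_\alpha$ has finite index in $D_\alpha$. Because the supports $I_\alpha$ are pairwise disjoint, the $D_\alpha$, and hence the $N_\alpha$, pairwise commute; thus $\prod_\alpha N_\alpha$ is an internal direct product, is contained in $N$, and satisfies $[D:\prod_\alpha N_\alpha]=\prod_\alpha[D_\alpha:N_\alpha]<\infty$. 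Consequently $\prod_\alpha N_\alpha$ has finite index in $D$, hence in $N$, hence in $K$.

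It remains to check that each $N_\alpha$ is again a diagonal subgroup of the required form. Writing $D_\alpha=\setst{(\theta_i(\ell))_{i\in I_\alpha}}{\ell\in L_\alpha}$ for the isomorphisms $\theta_i\colon L_\alpha\to G_i$ witnessing its diagonality, the projection to a single coordinate restricts to an isomorphism $D_\alpha\cong L_\alpha$. Under this isomorphism the finite index subgroup $N_\alpha$ corresponds to a finite index subgroup $M_\alpha\leq L_\alpha$, whence $N_\alpha=\setst{(\theta_i(m))_{i\in I_\alpha}}{m\in M_\alpha}$ is exactly a diagonal subgroup of $M_\alpha^{\abs{I_\alpha}}$, and $M_\alpha$ has finite index in $G_\alpha$ because $L_\alpha$ does. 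Assembling the same partition $I=\bigsqcup_\alpha I_\alpha$, the same abstract groups $G_\alpha$, the new finite index subgroups $M_\alpha\subset G_\alpha$, and the diagonal subgroups $N_\alpha$, together with the fact that $K$ contains $\prod_\alpha N_\alpha$ with finite index, exhibits $K$ as virtually diagonal by blocks. I expect the only real (though mild) obstacle to be the second paragraph: a finite index subgroup of a direct product is in general not a direct product, and this is circumvented precisely by passing to the possibly smaller product $\prod_\alpha(N\cap D_\alpha)$ and using the pairwise commuting of the blocks to bound its index.
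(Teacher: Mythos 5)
Your proof is correct and takes essentially the same approach as the paper's: since $D_\alpha\le D$, your $N_\alpha=(K\cap D)\cap D_\alpha$ is exactly the paper's $K\cap D_\alpha$, so your block product $\prod_{\alpha}N_\alpha$ coincides with the paper's $B'\coloneqq\prod_{\alpha\in\Delta}K\cap D_\alpha$. Both arguments then conclude identically, by noting that a finite index subgroup of a diagonal subgroup is diagonal over a finite index subgroup, and by bounding the index of the block product via the (finite) product $\prod_{\alpha\in\Delta}[D_\alpha:K\cap D_\alpha]$.
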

\begin{proof}
Let $B\coloneqq\prod_{\alpha\in\Delta}D_\alpha$ be the finite index diagonal by block subgroup contained in $H$, with the notation from Definition~\ref{Def:BlockStructure}, where $D_\alpha$ is a diagonal subgroup of $L_\alpha^{\abs{I_\alpha}}$ for some finite index subgroup $L_\alpha$ of $G_\alpha$.
Then we claim that the subgroup $B'\coloneqq\prod_{\alpha\in\Delta}K\cap D_\alpha$ is a diagonal by blocks subgroup of $\prod_{i\in I}G_i$, which has finite index in $K$.
Indeed, since $K$ has finite index in $H$, the subgroup $K\cap D_\alpha$ has finite index in $D_\alpha$ for every $\alpha\in\Delta$.
In particular, $K\cap D_\alpha$ is a diagonal subgroup of $L_\alpha'^{\abs{I_\alpha}}$ for some finite index subgroup $L_\alpha'$ of $L_\alpha$. We conclude that $L_\alpha'$ has finite index in $G_\alpha$ and that $B'$ is a diagonal by blocks subgroup.
Finally, the index of $B'$ in $B$ is bounded above by the finite product $\prod_{\alpha\in \Delta}[D_\alpha:K\cap D_\alpha]$ and is hence finite.
This implies that $B'$ has finite index in $H$, and so has finite index in $K$.
\end{proof}
%
%
%
%
%
%
%
%
%
%
\section{Block structure in branch groups}\label{Section:ProofsBranch}
We begin this section with a consequence of Lemma~\ref{Lemma:subdirectProductJustInf}.
While this result is not used in the proof of the main results, we include it as another application of subdirect products techniques.
\begin{lemma}
Let $A$ be a subgroup of the first Grigorchuk group $\Grig$.
Suppose that both first level sections of $\Stab_A(\level1)$ are equal to $\Grig$.
Then $A$ contains $D\times D$ for some finite index subgroup $D$ of $\Grig$ and is thus of finite index.
\end{lemma}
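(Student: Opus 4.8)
The plan is to study $S\coloneqq\Stab_A(\level1)$ through its two first-level sections. Since an automorphism fixing the first level is completely determined by its sections at the vertices $0$ and $1$, the map $\psi\coloneqq(\varphi_0,\varphi_1)$ embeds $S$ into $\Grig\times\Grig$, and the hypothesis that both sections equal $\Grig$ says precisely that $\psi(S)\subdirect\Grig\times\Grig$. As $\Grig$ is just infinite, I would feed this into Lemma~\ref{Lemma:subdirectProductJustInf}, which leaves exactly two possibilities for $\psi(S)$.

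In the first case $\psi(S)$ contains $D_1\times D_2$ for finite index subgroups $D_i\leq\Grig$. Setting $D\coloneqq D_1\cap D_2$ gives $D\times D\subseteq\psi(S)$. Because $\psi$ is injective on $\Stab_\Grig(\level1)$ and $\psi(S)\subseteq\psi(\Stab_\Grig(\level1))$, every pair in $D\times D$ is realised by a genuine element of $\Grig$; hence the subgroup of $\Aut(T)$ of all elements with both sections in $D$ lies in $S\leq A$, that is, $A\supseteq D\times D$. Since $\Grig$ is branch and $D$ has finite index, $D\times D$ has finite index in $\Grig$, so $A$ is of finite index and the statement holds.

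The real work is to exclude the second, diagonal, alternative $\psi(S)=\diag\bigl(\Grig\times\alpha(\Grig)\bigr)$ with $\alpha\in\Aut(\Grig)$; this is the main obstacle, since a diagonal $\psi(S)$ would make $A$ an infinite index copy of $\Grig$ and falsify the conclusion. I would rule it out on the abelianisation $\Grig^{\mathrm{ab}}\cong\mathbb F_2^3$, spanned by the classes $\bar a,\bar b,\bar c$ (with $\bar d=\bar b+\bar c$). A direct computation from the standard wreath recursion $b=(a,c)$, $c=(a,d)$, $d=(1,b)$ generating $\Stab_\Grig(\level1)$, together with their conjugates by $a$, shows that the image of $\psi(\Stab_\Grig(\level1))$ in $\Grig^{\mathrm{ab}}\times\Grig^{\mathrm{ab}}$ is the codimension-two subspace cut out by the two \emph{crossed} relations equating the $\bar a$-component of the first factor with the $\bar c$-component of the second, and symmetrically the $\bar c$-component of the first with the $\bar a$-component of the second. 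If $\psi(S)$ were diagonal, the induced $\beta\in\mathrm{GL}_3(\mathbb F_2)$ would have its graph $\{(v,\beta v)\}$ contained in this subspace; evaluating the relation that equates the $\bar c$-component of the first factor with the $\bar a$-component of the second at $v=\bar c$ would force $\beta(\bar c)$ to have nonzero $\bar a$-component.

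This is where the rigidity of $\Grig$ must enter: $\Stab_\Grig(\level1)$ is characteristic, because every automorphism of $\Grig$ is induced by a tree automorphism and hence preserves each level stabiliser. Consequently $\beta$ preserves the image $\langle\bar b,\bar c\rangle$ of $\Stab_\Grig(\level1)$ in $\Grig^{\mathrm{ab}}$, and since $\bar c$ lies in this subspace, $\beta(\bar c)$ has trivial $\bar a$-component, contradicting the previous line. Thus the diagonal case cannot occur and we are always in the first case. The two ingredients to check carefully are the crossed form of the Goursat data for $\Stab_\Grig(\level1)$ inside $\Grig\times\Grig$, which is a finite explicit computation, and the characteristicness of the level-one stabiliser, which is the crucial structural input and the place where the argument is genuinely specific to $\Grig$.
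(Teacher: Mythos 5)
Your proof is correct, and its skeleton coincides with the paper's: pass to $S=\Stab_A(\level1)$, identify it via $(\varphi_0,\varphi_1)$ with a subdirect product of $\Grig\times\Grig$, apply Lemma~\ref{Lemma:subdirectProductJustInf}, handle the product case by intersecting the two finite index factors and pulling back along the injective section map, and rule out the diagonal case using that $H\coloneqq\Stab_\Grig(\level1)$ is characteristic. The genuine difference is how the diagonal alternative is excluded. The paper stays inside the group: if $\bigl(a,\alpha(a)\bigr)$ were an element of $\Grig$, then $c\cdot\bigl(a,\alpha(a)\bigr)=(a,d)\cdot\bigl(a,\alpha(a)\bigr)=\bigl(1,d\alpha(a)\bigr)$ would lie in $\Rist_\Grig(1)$, so $d\alpha(a)\in\varphi_1\bigl(\Rist_\Grig(1)\bigr)=B\subseteq H$, hence $\alpha(a)\in dB\subseteq H$ and $a\in\alpha^{-1}(H)=H$, a contradiction. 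You instead work in $\Grig^{\mathrm{ab}}\times\Grig^{\mathrm{ab}}$: your identification of the image of $H$ as the ``crossed'' subspace (the $\bar a$-coordinate of the first factor equals the $\bar c$-coordinate of the second, and vice versa) is correct --- it follows from $b=(a,c)$, $c=(a,d)$, $d=(1,b)$ and their $a$-conjugates --- and the graph of an automorphism of $\mathbb F_2^3$ preserving $\gen{\bar b,\bar c}$ cannot sit inside that subspace (evaluate at $\bar c$), exactly as you argue. The trade-off: the paper's exclusion is a one-line group computation but leans on the structural fact $\varphi_1\bigl(\Rist_\Grig(1)\bigr)=B$, whereas yours needs only the wreath recursion modulo the commutator subgroup, a finite $\mathbb F_2$-linear check, which is more mechanical and more portable to other groups whose first-level data one knows. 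Both arguments hinge on the same nontrivial external input --- that $\Stab_\Grig(\level1)$ is characteristic in $\Grig$, because automorphisms of $\Grig$ are induced by tree automorphisms --- which the paper also invokes without proof and which you correctly single out as the crucial ingredient. Finally, your finite-index bookkeeping in the non-diagonal case is sound: since $\Grig$ is regular branch over $K$, the image of $\Stab_\Grig(\level1)$ under $(\varphi_0,\varphi_1)$ contains $K\times K$ and hence has finite index in $\Grig\times\Grig$, so the preimage of $D\times D$ has finite index in $\Grig$.
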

\begin{proof}
Since $\Stab_A(\level1)$ has finite index in $A$, we can suppose that $A$ is a subgroup of $H\coloneqq\Stab_\Grig(\level1)$ and hence of $\Grig\times\Grig$.
By Lemma~\ref{Lemma:subdirectProductJustInf}, if both sections of $A$ are equal to $\Grig$, then either $A$ contains $D\times D$ for some finite index subgroup~$D$, or $A$ is of the form $\diag\bigl(\Grig\times\psi(\Grig)\bigr)$ for some automorphism of $\Grig$.
We argue that the second case may not happen.
More precisely, $\diag\bigl(\Grig\times\psi(\Grig)\bigr)$ is never a subgroup of $\Grig$.
Indeed, if it was the case, then $c\cdot \bigl(a,\psi(a)\bigr)=\bigl(1,d\psi(a)\bigr)$ would be in $\Grig$.
But then, $d\psi(a)$ is in $\varphi_1\bigl(\Rist_\Grig(1)\bigr)=B$ and hence $\psi(a)$ belongs to $dB\subseteq H$.
In particular, we would have that $a$ belongs to $\psi^{-1}(H)=H$ since $H$ is a characteristic subgroup of $\Grig$, which is our desired contradiction.
\end{proof}
We will use many times and without explicit reference the following easy lemma.
\begin{lemma}\label{Lemma:LevelVSTransversal}
Let $G$ be any subgroup of $\Aut(T)$ and $H$ a subgroup of $G$.
Then the following are equivalent.
\begin{enumerate}
\item \label{item:11}
There exists $n$ such that for every $v\in\level{n}$ the section $\varphi_v\bigl(\Stab_H(\level{n})\bigr)$ is either trivial or has finite index in $\varphi_v\bigl(\Stab_G(v)\bigr)$,
\item \label{item:12}There exists a transversal $X$ of $T$ such that for every $v\in X$ the section $\varphi_v\bigl(\Stab_H(X)\bigr)$ is either trivial or has finite index in $\varphi_v(G)$.
\end{enumerate}
\end{lemma}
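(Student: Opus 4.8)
The plan is to prove the two implications separately; the first is essentially immediate, while the second requires a short descent argument. For $(\ref{item:11})\Rightarrow(\ref{item:12})$ I would simply observe that every level $\level{n}$ is itself a transversal, since each infinite ray from the root meets $\level{n}$ in its unique length-$n$ prefix. Taking $X=\level{n}$, the pointwise stabilizers $\Stab_H(X)$ and $\Stab_H(\level{n})$ coincide, and with the shorthand $\varphi_v(G)=\varphi_v\bigl(\Stab_G(v)\bigr)$, condition $(\ref{item:12})$ is literally condition $(\ref{item:11})$.

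For $(\ref{item:12})\Rightarrow(\ref{item:11})$, given a transversal $X$ I would set $n\coloneqq\max\{\abs{x}\mid x\in X\}$, which is finite since $X$ is finite. Two elementary observations drive the argument. First, an automorphism fixing every vertex of $\level{n}$ fixes every vertex of level $\le n$ (it preserves prefixes, and every such vertex is the prefix of some level-$n$ vertex); hence $\Stab_H(\level{n})\subseteq\Stab_H(X)$, and moreover each $v\in\level{n}$ lies below a unique $x\in X$, because the ray through $v$ meets $X$ exactly once and at a vertex of level $\le n$. Second, since each point stabilizer has finite index in $H$ (every orbit is contained in a single, finite level), both $\Stab_H(X)$ and $\Stab_H(\level{n})$ are finite-index subgroups of $H$, so $\Stab_H(\level{n})$ has finite index in $\Stab_H(X)$. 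Applying the restriction homomorphism $\varphi_x$, which is defined on $\Stab_H(X)\subseteq\Stab_H(x)$, it follows that $\varphi_x\bigl(\Stab_H(\level{n})\bigr)$ has finite index in $\varphi_x\bigl(\Stab_H(X)\bigr)$ for each $x\in X$.

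Now I would fix $x\in X$ and distinguish the two cases of hypothesis $(\ref{item:12})$. If $\varphi_x\bigl(\Stab_H(X)\bigr)=\{1\}$, then $\varphi_x\bigl(\Stab_H(\level{n})\bigr)=\{1\}$, so every element of $\Stab_H(\level{n})$ acts trivially on $T_x$ and hence on every $T_v$ with $x\le v$; thus $\varphi_v\bigl(\Stab_H(\level{n})\bigr)$ is trivial for all $v\in\level{n}$ below $x$. If instead $\varphi_x\bigl(\Stab_H(X)\bigr)$ has finite index in $\varphi_x\bigl(\Stab_G(x)\bigr)$, then by the previous paragraph $P\coloneqq\varphi_x\bigl(\Stab_H(\level{n})\bigr)$ has finite index in $\varphi_x\bigl(\Stab_G(x)\bigr)$ as well. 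The descent to level $n$ then uses the compatibility $\varphi_v=\varphi_w\circ\varphi_x$ for $v=xw$: writing $w$ for the length-$(n-\abs{x})$ vertex of $T_x$ with $xw=v$, one checks that $\varphi_x\bigl(\Stab_G(v)\bigr)=\Stab_{\varphi_x(\Stab_G(x))}(w)$, whence $\varphi_v\bigl(\Stab_G(v)\bigr)=\varphi_w\bigl(\Stab_{\varphi_x(\Stab_G(x))}(w)\bigr)$. Since $P$ fixes $w$ and has finite index in $\varphi_x\bigl(\Stab_G(x)\bigr)$, it has finite index in $\Stab_{\varphi_x(\Stab_G(x))}(w)$; applying $\varphi_w$ shows that $\varphi_v\bigl(\Stab_H(\level{n})\bigr)=\varphi_w(P)$ has finite index in $\varphi_v\bigl(\Stab_G(v)\bigr)$. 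As every $v\in\level{n}$ lies below exactly one $x\in X$, this establishes $(\ref{item:11})$.

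I expect the main obstacle to be the bookkeeping in the descent step: keeping straight the three nested restriction maps $\varphi_x$, $\varphi_w$ and $\varphi_v$, and verifying that both the triviality and the finite-index properties pass correctly from the single vertex $x$ to all of its level-$n$ descendants. The conceptual content—that $\Stab_H(\level{n})$ and $\Stab_H(X)$ differ only by finite index, and that taking sections preserves finite index—is straightforward, but the argument must be phrased so that it applies uniformly to an arbitrary $G\le\Aut(T)$, using no self-similarity or branching.
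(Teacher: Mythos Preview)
Your proof is correct and follows essentially the same approach as the paper's: both take $X=\level{n}$ for the easy direction, and for the converse set $n$ to be the maximal level appearing in $X$, use that $\Stab_H(\level{n})$ has finite index in $\Stab_H(X)$, and then descend from $x\in X$ to its level-$n$ descendants via the factorization $\varphi_v=\varphi_w\circ\varphi_x$. Your write-up is simply more explicit about the case split and the bookkeeping of the nested sections than the paper's one-line version, but the argument is the same.
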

\begin{proof}
It is clear that Property~\ref{item:11} implies Property~\ref{item:12}.
For the converse, let $H$ be a subgroup such that the conclusion of Property~\ref{item:12} holds for some $X$ and let $n$ be the maximal level of a vertex in $X$.
Then $\Stab_H(\level{n})$ is a finite index subgroup of $\Stab_H(X)$.
Now, let $w$ be a vertex of level $n$ and $v$ its unique ancestor in $X$.
Then $\varphi_w\bigl(\Stab_H(\level{n})\bigr)$ has finite index in $\varphi_w\bigl(\Stab_H(X)\bigr)=\varphi_w\bigl(\varphi_v\bigl(\Stab_H(X)\bigr)\bigr)$.
This implies that $\varphi_w\bigl(\Stab_H(\level{n})\bigr)$ is either a finite index subgroup of the trivial group (and hence trivial) or a finite index subgroup of a finite index subgroup of $\varphi_w\bigl(\Stab_G(w)\bigr)$.
\end{proof}
The subgroup induction property admits an equivalent characterization that will be more suitable for our aim.
\begin{proposition}\label{Proposition:SubgroupInductionW}
Let $G\leq \Aut(T)$ be a self-similar group.
Then the following are equivalent.
\begin{enumerate}
\item $G$ has the subgroup induction property,
\item \label{item:3}For every finitely generated subgroup $H\leq G$, there exists a transversal $X$ of $T$ such that for every $v\in X$ the section $\varphi_v\bigl(\Stab_H(X)\bigr)$ is either trivial or has finite index in $\varphi_v(G)$.
\end{enumerate}
\end{proposition}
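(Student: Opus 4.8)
The goal is to prove the equivalence of the abstract subgroup induction property with the concrete transversal condition in statement~\ref{item:3}. I will prove both implications, and the main work lies in the forward direction.

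\textbf{From (1) to (2).} The plan is to define the family
\[
\mathcal X\coloneqq\setst{H\leq G}{\text{there is a transversal }X\text{ with }\varphi_v\bigl(\Stab_H(X)\bigr)\text{ trivial or finite index in }\varphi_v(G)\text{ for all }v\in X}
\]
and to verify that it is inductive in the sense of Definition~\ref{Definition:InductiveClass}; then the subgroup induction property immediately places every finitely generated $H\leq G$ inside $\mathcal X$, which is exactly property~\ref{item:3}. The trivial group and $G$ both lie in $\mathcal X$ using the root transversal $X=\{\emptyset\}$, so axiom~\ref{Item:DefSubgroupInduction1} holds. For axiom~\ref{Item:DefSubgroupInduction3}, if $H\leq\Stab_G(1)$ is finitely generated with all first-level sections $\varphi_i(H)\in\mathcal X$, I would take a witnessing transversal $X_i$ of $T$ (identified with $T_i$) for each section and assemble $X\coloneqq\bigcup_i\{\,iw : w\in X_i\,\}$, which is a transversal of $T$; the compatibility $\varphi_{iw}(\Stab_H(X))$ being controlled by $\varphi_w(\Stab_{\varphi_i(H)}(X_i))$ then transfers the finite-index/trivial dichotomy level by level, using self-similarity so that each $\varphi_{iw}(G)$ is identified with the corresponding section of $G$.

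\textbf{The finite-index axiom is the delicate point.} The genuine obstacle is axiom~\ref{Item:DefSubgroupInduction2}: if $[L:H]<\infty$, I must show $H\in\mathcal X\iff L\in\mathcal X$. Here Lemma~\ref{Lemma:LevelVSTransversal} is the essential tool, since it lets me pass freely between a transversal condition and a single-level condition at some $\level{n}$. Given a witnessing transversal for one of $H,L$, I would refine to a common level $\level n$ (at which both $\Stab_H(\level n)$ and $\Stab_L(\level n)$ are the relevant finite-index objects) and compare the sections $\varphi_v\bigl(\Stab_H(\level n)\bigr)$ and $\varphi_v\bigl(\Stab_L(\level n)\bigr)$. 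Because $[L:H]<\infty$, the level stabilizer $\Stab_H(\level n)$ has finite index in $\Stab_L(\level n)$, and applying the section homomorphism $\varphi_v$ shows $\varphi_v\bigl(\Stab_H(\level n)\bigr)$ has finite index in $\varphi_v\bigl(\Stab_L(\level n)\bigr)$ for each $v$. A finite-index subgroup of a finite-index subgroup of $\varphi_v(G)$ is again finite index, and a finite-index subgroup of the trivial group is trivial; the subtle case is that a section could be trivial for $L$ but the argument must guarantee it is then trivial for $H$ as well (immediate, since $\Stab_H\subseteq\Stab_L$) and conversely a nontrivial finite-index section of $L$ can only shrink to a still-finite-index section of $H$. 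Care is needed that ``trivial'' is not accidentally created or destroyed, but the containment $\Stab_H(\level n)\leq\Stab_L(\level n)$ together with finite index makes both directions go through.

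\textbf{From (2) to (1).} For the converse I would again use the candidate inductive class formulation: assuming property~\ref{item:3} holds, I must show that \emph{every} inductive class $\mathcal X$ contains all finitely generated subgroups. The strategy is an induction on the transversal furnished by~\ref{item:3}. Given finitely generated $H\leq G$, take the transversal $X$ with each $\varphi_v\bigl(\Stab_H(X)\bigr)$ trivial or finite index in $\varphi_v(G)$. Passing to the finite-index subgroup $\Stab_H(X)$ (permissible by axiom~\ref{Item:DefSubgroupInduction2}) and arguing inductively down the levels of $X$ via axiom~\ref{Item:DefSubgroupInduction3}, each relevant section is either trivial (hence in $\mathcal X$ by axiom~\ref{Item:DefSubgroupInduction1}) or a finite-index subgroup of $\varphi_v(G)=G$ (hence in $\mathcal X$ by axioms~\ref{Item:DefSubgroupInduction1} and~\ref{Item:DefSubgroupInduction2}, using self-replication to identify $\varphi_v(G)$ with $G$). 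Rebuilding $H$ from these sections through repeated application of axiom~\ref{Item:DefSubgroupInduction3}, and invoking axiom~\ref{Item:DefSubgroupInduction2} once more to pass from $\Stab_H(X)$ back to $H$, yields $H\in\mathcal X$. The one point to watch is that axiom~\ref{Item:DefSubgroupInduction3} is stated for subgroups of $\Stab_G(1)$ with first-\emph{level} sections, so I must organise the descent through the transversal one level at a time rather than all at once.
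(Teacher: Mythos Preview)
Your overall strategy matches the paper's: for (1)$\Rightarrow$(2) define $\mathcal X$ to be the class of subgroups satisfying the transversal condition and verify it is inductive, and for (2)$\Rightarrow$(1) collapse a witnessing transversal one internal vertex at a time via axiom~\ref{Item:DefSubgroupInduction3}. Your handling of axioms~\ref{Item:DefSubgroupInduction1} and~\ref{Item:DefSubgroupInduction3} and of the converse direction is essentially the paper's. However, there is a genuine gap in your treatment of axiom~\ref{Item:DefSubgroupInduction2}, and you have misidentified which direction is delicate.

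Your case analysis only establishes $L\in\mathcal X\Rightarrow H\in\mathcal X$: if $L$'s section at $v$ is trivial then so is $H$'s (since $\Stab_H\leq\Stab_L$), and if $L$'s section has finite index in $\varphi_v(G)$ then so does $H$'s (finite index in finite index). The sentences you label ``the subtle case'' and ``conversely'' are both still this easy direction. The hard direction is $H\in\mathcal X\Rightarrow L\in\mathcal X$: if $\varphi_v\bigl(\Stab_H(\level n)\bigr)$ is \emph{trivial}, then $\varphi_v\bigl(\Stab_L(\level n)\bigr)$, which contains it with finite index, is only \emph{finite}, and a nontrivial finite group is neither trivial nor of finite index in the infinite group $\varphi_v(G)$. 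Your assertion that ``the containment \dots\ together with finite index makes both directions go through'' is exactly where this fails. The paper closes the gap by a further refinement: for each $v$ with $\varphi_v\bigl(\Stab_L(X)\bigr)$ finite, it chooses a transversal $X_v$ of $T_v$ deep enough that this finite group has trivial pointwise stabilizer on $X_v$, and replaces $v$ by $X_v$ to obtain a new transversal $X'$ witnessing $L\in\mathcal X$. Without this step your argument does not produce a witnessing level for $L$. (A secondary remark: in (2)$\Rightarrow$(1) you invoke ``self-replication'' to identify $\varphi_v(G)$ with $G$, but only self-similarity is assumed; the paper's inductive collapse does not name that hypothesis.)
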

\begin{proof}
We first show that the subgroup induction property implies the second assertion.
Let $\mathcal{X}$ be the class of subgroups $H$ of $G$ such that there exists a transversal $X$ with $\varphi_v\bigl(\Stab_H(X)\bigr)$ trivial or of finite index in $\varphi_v(G)$ for every $v$ in $X$.
We have to show that this class is inductive.

It is obvious that both $\{1\}$ and $G$ belong to $\mathcal{X}$ --- in both cases we can take $X$ to be the root of $T$.

Let $H$ and $L$ be two subgroups of $G$ such that $H$ is a finite index subgroup of $L$.
We need to show that $H$ is in $\mathcal X$ if and only if $L$ is.
For every $X$ the subgroup $\Stab_H(X)$ has finite index in $\Stab_L(X)$ and hence $\varphi_v\bigl(\Stab_H(X)\bigr)$ has finite index in $\varphi_v\bigl(\Stab_L(X)\bigr)$.
In particular, if $L$ is in $\mathcal X$, so is $H$, and this for the same transversal $X$.
On the other hand, if $H$ is in $\mathcal X$ with transversal $X$, then for every $v\in X$ the subgroup $\varphi_v\bigl(\Stab_L(X)\bigr)$ is either finite or has finite index in $\varphi_v(G)$.
Let $v$ be a vertex of $X$ such that $\varphi_v\bigl(\Stab_L(X)\bigr)$ is finite.
There exists a transversal $X_v$ of $T_v$ such that $\Stab_{\varphi_v\bigl(\Stab_L(X)\bigr)}(X_v)$ is trivial.
Let $X'$ be the transversal obtained from $X$ by removing all $v\in X$ such that $\varphi_v\bigl(\Stab_L(X)\bigr)$ is finite and replacing them by the corresponding $X_v$.
Then, for every $w\in X'$, either $\varphi_w\bigl(\Stab_L(X')\bigr)$ is trivial or it has finite index in $\varphi_w\bigl(\Stab_G(w)\bigr)$.

Finally, let $\{1,\dots, d\}$ be the first level vertices of $T$ and let $H$ be a finitely generated subgroup of $\Stab_G(\level{1})$ such that all its sections $H_1,\dots,H_d$ belong to $\mathcal X$.
Each of the $H_i$ comes with a transversal $X_i$ of $T_i$. Let $X$ be the union of these $X_i$; it is a transversal for $T$.
Let $H'\coloneqq\Stab_H(X)$. This is a finite index subgroup of $H$ with first level sections $H_i'\coloneqq \varphi_i(H')$ of finite index in $H_i$.
Therefore, for every $v$ in $X_i$, the section $\varphi_v(H_i')=\varphi_v(H')$ is either trivial or of finite index in $\varphi_v(G)$.
This implies that $H'$ belongs to $\mathcal X$ and so does $H$ since $\mathcal X$ is closed under finite extensions.

Now, suppose that $G$ satisfies Property~\ref{item:3} and let $\mathcal X$ be an inductive class of subgroups.
In particular, $\mathcal X$ contains $\{1\}$ and every finite index subgroup of~$G$.
Let $H$ be a finitely generated subgroup of $G$ and $X$ be the corresponding transversal. We need to show that $H$ is in $\mathcal X$.
Let $H'\coloneqq\Stab_H(X)$. Since $H'$ has finite index in $H$, it is finitely generated and, if it belongs to $\mathcal X$, so does $H$.
If $X$ consists only of the root of $T$, then either $H$ is finite, or it has finite index in $G$. In both cases, it belongs to $\mathcal X$.
Otherwise, let $v$ be a vertex not in $X$ but such that all its children $w_1,\dots,w_d$ are in $X$.
Since $H'$ stabilizes $X$, all the $\varphi_{w_i}\bigl(\varphi_v(H')\bigr)=\varphi_{w_i}(H')$ are in $\mathcal X$ and $\varphi_v(H')$ is a finitely generated subgroup of $G$ stabilizing the first level of $T_v$. In particular, $\varphi_v(H')$ belongs to $\mathcal X$.
Let $X_1\coloneqq v\cup X\setminus\{w_1,\dots,w_d\}$. By the above, $H'$ pointwise stabilizes $X_1$ and all its sections along $X_1$ are in $\mathcal X$.
By induction, there exists $m$ such that $X_m$ is the root and $H'$ belongs to $\mathcal X$.
\end{proof}
\begin{remark}
The alternative characterization of the subgroup induction property given by Proposition~\ref{Proposition:SubgroupInductionW} is natural even in a non-self-similar context.
From now on, we will say that a group $G$ has the \emph{subgroup induction property} if it satisfies this alternative characterization.

It is in fact possible to slightly modify the original definition of an inductive class of subgroups $\mathcal X$ (which is replaced by an inductive family $(\mathcal X_v)_{v\in T}$ of classes of subgroups) to take into account groups that are not necessarily self-similar in the Definition~\ref{Definition:SubgroupInduction}.
Proposition~\ref{Proposition:SubgroupInductionW} and its proof extend to this general context.
The details are left to the interested reader.
\end{remark}

We now derive some corollaries from this more general subgroup induction property.
The first one is about the rank of $G$.
\begin{corollary}\label{Cor:LocallyFinite}
Let $G\leq \Aut(T)$ be a branch group with the subgroup induction property.
Then $G$ is either finitely generated or locally finite.
\end{corollary}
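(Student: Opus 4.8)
The plan is to establish the equivalent implication: if $G$ is not locally finite, then $G$ is finitely generated. So I would start from a finitely generated \emph{infinite} subgroup $H\leq G$ and aim to deduce finite generation of $G$ itself. The bridge will be the rigid stabilizers, since for a branch group $\Rist_G(n)$ is of finite index in $G$, so $G$ is finitely generated as soon as the rigid stabilizers of some level are.

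First I would feed $H$ into the subgroup induction property in its transversal formulation (Proposition~\ref{Proposition:SubgroupInductionW}): there is a transversal $X$ such that for each $v\in X$ the section $\varphi_v\bigl(\Stab_H(X)\bigr)$ is trivial or of finite index in $\varphi_v(G)$. As $X$ is finite, $\Stab_H(X)=\bigcap_{v\in X}\Stab_H(v)$ is of finite index in $H$, hence finitely generated and infinite. Because $X$ is a transversal, the map $g\mapsto\bigl(\varphi_v(g)\bigr)_{v\in X}$ embeds $\Stab_H(X)$ into $\prod_{v\in X}\varphi_v\bigl(\Stab_H(X)\bigr)$, so infiniteness forces some section $\varphi_{v_0}\bigl(\Stab_H(X)\bigr)$ to be non-trivial, hence of finite index in $\varphi_{v_0}(G)$. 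Being a homomorphic image of a finitely generated group it is finitely generated, and therefore so is its finite-index overgroup $\varphi_{v_0}(G)$.

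The technical heart, and the step I expect to be the main obstacle, is to transfer finite generation from the \emph{section} $\varphi_{v_0}(G)=\varphi_{v_0}\bigl(\Stab_G(v_0)\bigr)$ to the \emph{rigid} stabilizer $\Rist_G(v_0)$. I would show that $\varphi_{v_0}\bigl(\Rist_G(v_0)\bigr)$ is of finite index in $\varphi_{v_0}(G)$ by a two-step index chase, writing $n_0=\abs{v_0}$. On one hand, the branch hypothesis gives that $\Rist_G(n_0)=\prod_{v\in\level{n_0}}\Rist_G(v)$ is of finite index in $\Stab_G(\level{n_0})$; projecting to the $v_0$-coordinate shows $\varphi_{v_0}\bigl(\Rist_G(v_0)\bigr)$ is of finite index in $\varphi_{v_0}\bigl(\Stab_G(\level{n_0})\bigr)$. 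On the other hand, the action of $\Stab_G(v_0)$ on the finite set $\level{n_0}$ has kernel $\Stab_G(\level{n_0})$, so $\Stab_G(v_0)/\Stab_G(\level{n_0})$ embeds into the symmetric group of $\level{n_0}$ and $\Stab_G(\level{n_0})$ is of finite index in $\Stab_G(v_0)$; applying $\varphi_{v_0}$ shows $\varphi_{v_0}\bigl(\Stab_G(\level{n_0})\bigr)$ is of finite index in $\varphi_{v_0}\bigl(\Stab_G(v_0)\bigr)=\varphi_{v_0}(G)$. Composing the two finite-index inclusions yields the claim.

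Finally I would assemble the conclusion. Since $\varphi_{v_0}(G)$ is finitely generated and $\varphi_{v_0}\bigl(\Rist_G(v_0)\bigr)$ is of finite index in it, the latter is finitely generated, and hence so is $\Rist_G(v_0)$, to which it is isomorphic. Transitivity of the $G$-action on $\level{n_0}$ makes all $\Rist_G(v)$ with $v\in\level{n_0}$ conjugate, hence all finitely generated, so $\Rist_G(n_0)=\prod_{v\in\level{n_0}}\Rist_G(v)$ is finitely generated as a finite product. As $G$ is branch, $\Rist_G(n_0)$ is of finite index in $G$, and a group possessing a finitely generated finite-index subgroup is itself finitely generated; thus $G$ is finitely generated, establishing the dichotomy.
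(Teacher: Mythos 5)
Your proof is correct and is essentially the paper's own argument run in the contrapositive direction: the paper assumes $G$ is not finitely generated, deduces that no section $\varphi_v(G)$ is finitely generated, and uses the subgroup induction dichotomy to force every finitely generated subgroup to intersect $\Rist_G(\level{n})$ trivially and hence be finite, while you start from an infinite finitely generated subgroup and push finite generation upward through $\varphi_{v_0}(G)$, then $\Rist_G(v_0)$, then $\Rist_G(n_0)$, to $G$. Both arguments rest on exactly the same ingredients --- the trivial-or-finite-index dichotomy supplied by the subgroup induction property and the finite-index/isomorphism bridge $\Rist_G(v)\cong\varphi_v\bigl(\Rist_G(v)\bigr)\leq\varphi_v(G)$ together with $[G:\Rist_G(n_0)]<\infty$ --- so this is the same proof organized in the opposite direction.
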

\begin{proof}
We can suppose that $G$ is not finitely generated as otherwise there is nothing to prove.

If $H$ is a finitely generated subgroup of $G$, there exists $n$ such that for every vertex $v$ of level $n$ the section $\varphi_v\bigl(\Stab_H(\level{n})\bigr)$ is either trivial or has finite index in $\varphi_v(G)$.
Then $\Rist_G(\level{n})\cap H$ has finite index in $H$ and for every $v\in \level n$ the subgroup $\varphi_v(\Rist_G(\level{n})\cap H)$ is either trivial or has finite index in $\varphi_v(G)$.
Since $\Rist_G(\level{n})\cap H$ has finite index in $H$, it is finitely generated.

On the other hand, for any $v\in\level{n}$, the number of generators of $\Rist_G(\level{n})\cap H$ is at least the number of generators of $\varphi_v(\Rist_G(\level{n})\cap H)$.
Since $G$ is branch but not finitely generated, the rigid stabilizer $\Rist_G(\level{n})$ is not finitely generated and so is $\Rist_G(w)$ for any $w$.
Therefore, for any $w\in T$, the subgroups $\varphi_w\bigl(\Rist_G(w)\bigr)$ and $\varphi_w(G)$ are not finitely generated.
Altogether, this implies that for every $w\in \level n$, the subgroup $\varphi_w(\Rist_G(\level{n})\cap H)$ is trivial.
That is, the subgroup $\Rist_G(\level{n})\cap H$ is itself trivial and $H$ is finite.
\end{proof}
We also have a result about finite subgroups of $G$
\begin{corollary}\label{Cor:FiniteNoProjection}
Let $G\leq \Aut(T)$ be a weakly branch group with the subgroup induction property and let $H$ be a finitely generated subgroup of $G$.
Then $H$ is finite if and only if no section $\varphi_v(H)$ has finite index in $\varphi_v(G)$.
\end{corollary}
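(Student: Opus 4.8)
The plan is to prove the two implications separately; the forward one is where the weakly branch hypothesis enters, and the reverse one is where the subgroup induction property is used. For the forward direction, I would assume $H$ is finite, so that each section $\varphi_v(H)=\varphi_v\bigl(\Stab_H(v)\bigr)$ is finite. Since $G$ is weakly branch, $\Rist_G(v)$ is infinite; because $\Rist_G(v)\leq\Stab_G(v)$ and $\varphi_v$ is injective on $\Rist_G(v)$, the group $\varphi_v(G)=\varphi_v\bigl(\Stab_G(v)\bigr)$ contains the infinite group $\varphi_v\bigl(\Rist_G(v)\bigr)$ and is therefore infinite. A finite group cannot have finite index in an infinite one, so no section $\varphi_v(H)$ is of finite index in $\varphi_v(G)$.

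For the reverse direction, I would assume that no section $\varphi_v(H)$ has finite index in $\varphi_v(G)$. As $H$ is finitely generated and $G$ has the subgroup induction property, Proposition~\ref{Proposition:SubgroupInductionW} furnishes a transversal $X$ such that, for each $v\in X$, the section $\varphi_v\bigl(\Stab_H(X)\bigr)$ is either trivial or of finite index in $\varphi_v(G)$. Since $v\in X$ gives $\Stab_H(X)\leq\Stab_H(v)$, we have $\varphi_v\bigl(\Stab_H(X)\bigr)\leq\varphi_v\bigl(\Stab_H(v)\bigr)=\varphi_v(H)$; hence, were $\varphi_v\bigl(\Stab_H(X)\bigr)$ of finite index in $\varphi_v(G)$, then $\varphi_v(H)$ would be too, contradicting the hypothesis. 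Therefore $\varphi_v\bigl(\Stab_H(X)\bigr)=\{1\}$ for every $v\in X$.

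It then remains to conclude that $H$ is finite. I would first show that $\Stab_H(X)=\{1\}$: an element $h\in\Stab_H(X)$ fixes every $v\in X$, hence fixes all their ancestors (an automorphism fixing a vertex fixes its ancestors), while its restriction to each $T_v$, $v\in X$, equals $\varphi_v(h)\in\varphi_v\bigl(\Stab_H(X)\bigr)=\{1\}$. As $X$ is a transversal, every vertex of $T$ is either an ancestor of some vertex of $X$ or lies in some $T_v$ with $v\in X$, so $h$ fixes $T$ pointwise and $h=1$ by faithfulness of the action of $G\leq\Aut(T)$. Finally, $\Stab_H(X)$ has finite index in $H$: taking $n$ to be the maximal level of a vertex in $X$, one has $\Stab_H(\level{n})\leq\Stab_H(X)$, and $\Stab_H(\level{n})$ is the kernel of the finite permutation action of $H$ on $\level{n}$. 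Thus $H$ is finite.

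The substantive part is the reverse implication, and within it the main obstacle is the passage from triviality of all sections $\varphi_v\bigl(\Stab_H(X)\bigr)$ along the transversal to triviality of $\Stab_H(X)$ itself; this hinges on $X$ being a transversal, so that the finite top part of the tree together with the subtrees $T_v$ exhausts $T$, combined with the faithfulness of the action. The inclusion $\varphi_v\bigl(\Stab_H(X)\bigr)\leq\varphi_v(H)$ is the other step to get right, as it is what allows the hypothesis to rule out the finite-index alternative offered by the subgroup induction property.
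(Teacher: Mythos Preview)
Your proof is correct and follows the same approach as the paper's, which is very terse: the paper dismisses the forward direction as ``trivial'' and, for the reverse, invokes the transversal from the subgroup induction property and asserts without further argument that triviality of all sections along $X$ forces $\Stab_H(X)=\{1\}$ and hence $H$ finite. You have simply unpacked each of these steps---notably making explicit where the weakly branch hypothesis enters (to guarantee $\varphi_v(G)$ is infinite) and why $\Stab_H(X)$ is both trivial and of finite index in $H$.
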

\begin{proof}
One direction is trivial.
For the other direction, let $H$ be a finitely generated subgroup of $G$.
Then there exists a transversal $X$ such that for $v\in X$ the section $\varphi_v\bigl(\Stab_H(X)\bigr)$ is either trivial or of finite index in $\varphi_v(G)$.
If no section of $H$ has finite index, then all the sections of $\Stab_H(X)$ along $X$ are trivial, that is, $\Stab_H(X)$ is trivial and $H$ is finite.
\end{proof}
This allows us to obtain a generalization of Theorem 3 of~\cite{MR3513107}.
A branch group $G\leq\Aut(T)$ is said to have \emph{trivial branch kernel},~\cite{MR2891709}, if every finite index subgroup of $G$ contains the rigid stabilizer of a level, or equivalently if the topology defined by the $\Rist_G(\level{n})$ coincide with the profinite topology.
This property is implied by the more well-known \emph{congruence subgroup property} which says that every finite index subgroup of $G$ contains the stabilizer of a level.
See~\cite{MR2891709} for more on these properties.
\begin{corollary}\label{Cor:FiniteNoProjectionPlus}
Let $G\leq \Aut(T)$ be regular branch over a subgroup $K$. Suppose that $G$ has the subgroup induction property and trivial branch kernel.
Suppose also that there exists a vertex $v$ of $T$ with $\varphi_v\bigl(\Stab_K(v)\bigr)=G$.

Let $H$ be a finitely generated subgroup of $G$.
Then $H$ is finite if and only if no section of $H$ is equal to $G$.
\end{corollary}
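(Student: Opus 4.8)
We must prove Corollary~\ref{Cor:FiniteNoProjectionPlus}: under the hypotheses that $G$ is regular branch over $K$, has the subgroup induction property and trivial branch kernel, and that some vertex $v$ satisfies $\varphi_v\bigl(\Stab_K(v)\bigr)=G$, a finitely generated subgroup $H\leq G$ is finite if and only if no section of $H$ equals $G$. One direction is immediate: if some section $\varphi_w(H)=G$ then $H$ is infinite, since $G$ is infinite and $\varphi_w$ is a homomorphism on $\Stab_H(w)$, which has finite index in $H$. The content is the converse, and the natural strategy is to reduce it to Corollary~\ref{Cor:FiniteNoProjection}, which says that $H$ is finite iff no section $\varphi_w(H)$ has \emph{finite index} in $\varphi_w(G)$. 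Thus the whole task is to upgrade ``no section equals $G$'' to ``no section has finite index in $\varphi_w(G)$''; equivalently, to rule out sections of finite (but nontrivial) index.

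\textbf{Key steps.} First I would argue by contrapositive: suppose $H$ is infinite; then by Corollary~\ref{Cor:FiniteNoProjection} there is a vertex $w$ with $\varphi_w(H)$ of finite index in $\varphi_w(G)$. Since $G$ is self-replicating, $\varphi_w(G)=G$, so $\varphi_w(H)$ is a finite-index subgroup of $G$, call it $M$. The goal is to produce \emph{some} (possibly deeper) vertex at which the section of $H$ is all of $G$. The plan is to exploit the branch structure together with the hypothesis $\varphi_v\bigl(\Stab_K(v)\bigr)=G$. Because $M$ has finite index in $G$ and $G$ has trivial branch kernel, $M$ contains $\Rist_G(\level{m})$ for some level $m$; in particular $M$ contains $\Rist_G(u)$ for every vertex $u$ of level $m$, and hence contains $\varphi_u^{-1}$-preimages giving full rigid stabilizers below $w$. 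The essential point is that $K\leq \Rist_G(u)$-type subgroups are available deep in the tree, so that $M$ contains a conjugate/section copy of $\Stab_K(v)$. Concretely, I would locate inside $M\subseteq \varphi_w(H)$ a subgroup of the form $\Stab_K(v)$ sitting in the rigid stabilizer of some vertex $u$ below level $m$, and then take the section at the vertex $uv$ (i.e.\ at $v$ inside the copy $T_u$). Composing sections, $\varphi_{wuv}(H)\supseteq \varphi_v\bigl(\Stab_K(v)\bigr)=G$, and since every section lies in $G$ this forces $\varphi_{wuv}(H)=G$, giving a section equal to $G$, as desired.

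\textbf{The main obstacle.} The delicate step is the bookkeeping that turns ``$\varphi_w(H)$ contains a level rigid stabilizer'' into ``$\varphi_w(H)$ contains a full copy of $\Stab_K(v)$ inside some $\Rist_G(u)$.'' One must use that $G$ is regular branch over $K$ — so $\varphi_u\bigl(\Stab_K(\level{1})\bigr)$ contains $K$ up to finite index at each first-level vertex, iterating to deeper levels — to guarantee that the finite-index subgroup $M=\varphi_w(H)$, which contains $\Rist_G(\level m)\supseteq \prod_u \Rist_G(u)$, actually contains a conjugate of $K$ (and hence of $\Stab_K(v)$) living entirely inside a single $\Rist_G(u)$. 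Because $\Rist_G(u)\cong\varphi_u\bigl(\Rist_G(u)\bigr)$ via $\varphi_u$, and $K$ is finite index in $G$, a further descent of $m$ levels ensures the relevant copy of $\Stab_K(v)$ survives intact inside $M$ rather than being cut down; trivial branch kernel is exactly what makes this descent terminate. Once this containment is secured, taking the section at the appropriate descendant vertex and invoking $\varphi_v\bigl(\Stab_K(v)\bigr)=G$ closes the argument; the fact that all sections automatically lie in $G$ upgrades the resulting containment to equality.
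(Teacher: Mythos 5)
Your proposal is correct and follows essentially the same route as the paper's proof: reduce via Corollary~\ref{Cor:FiniteNoProjection} to showing that the finite-index subgroup $M=\varphi_w(H)$ of $G$ has a section equal to $G$, then use the trivial branch kernel to get $\Rist_G(u)\leq M$ for $u$ deep enough, the regular branch property to get $K\leq\varphi_u\bigl(\Rist_G(u)\bigr)$, and the hypothesis vertex $v$ to conclude $\varphi_{uv}(M)\supseteq\varphi_v\bigl(\Stab_K(v)\bigr)=G$, hence equality by self-replication. The only difference is cosmetic: the paper states the reduction as ``every finite-index subgroup of $G$ has a section equal to $G$,'' and your closing worry about the copy of $\Stab_K(v)$ ``surviving a further descent'' is superfluous, since the chain of containments above already completes the argument.
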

\begin{proof}
By Corollary~\ref{Cor:FiniteNoProjection}, if $H$ is infinite, it has a section $\varphi_v\bigl(\Stab_H(v)\bigr)$ which has finite index in $G$.
Since every section of $\varphi_v\bigl(\Stab_H(v)\bigr)$ is a section of $H$, it is sufficient to show that every finite index subgroup $N$ of $G$ has a section which is equal to $G$.

By the triviality of the branch kernel, $N$ contains the rigid stabilizer of a level and hence contains $\Rist_G(w)$ for all $w$ deep enough in the tree.
On the other hand, since $G$ is regular branch over $K$, the section $\varphi_w\bigl(\Rist_G(w)\bigr)$ contains $K$ and we are done.
\end{proof}
In the next two lemmas we prove that, under the right hypothesis, $H\leq G$ contains a block subgroup $A$ with $[H:A]<\infty$ if and only if there exists a transversal $X$ of $T$ such that for every $v\in X$ the section $\varphi_v\bigl(\Stab_H(X)\bigr)$ is either trivial or has finite index in $\varphi_v(G)$.
\begin{lemma}\label{Lemma:ProofOfMainThm}
Let $G\leq \Aut(T)$ be a just infinite branch group and let $H\leq G$ be a subgroup.
Suppose that there exists a transversal $X$ of $T$ such that for every $v\in X$ the section $\varphi_v\bigl(\Stab_H(X)\bigr)$ is either trivial or has finite index in $\varphi_v(G)$.
Then $H$ contains a block subgroup $A$ with $[H:A]<\infty$.
\end{lemma}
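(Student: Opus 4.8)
The plan is to reduce the statement to the structure theorem for subdirect products, Theorem~\ref{Thm:SubdirectBloc}, applied to the rigid stabilizers sitting below the transversal $X$, and then to translate the resulting virtually-diagonal-by-blocks decomposition into a genuine block subgroup in the sense of Definition~\ref{Def:BlockStructureBranch}. First I would pass to a convenient finite index subgroup. Since $X$ is a transversal and $G$ is branch, $\Rist_G(X)\coloneqq\prod_{v\in X}\Rist_G(v)$ contains $\Rist_G(\level n)$ for $n$ the maximal level occurring in $X$, hence has finite index in $G$. Therefore $N\coloneqq H\cap\Rist_G(X)=\Stab_H(X)\cap\Rist_G(X)$ has finite index in $H$, and it suffices to produce a block subgroup of finite index inside $N$. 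Using that $\varphi_v$ restricted to $\Rist_G(v)$ is an isomorphism onto $R_v\coloneqq\varphi_v\bigl(\Rist_G(v)\bigr)$, I identify $N$ with a subgroup of $\prod_{v\in X}R_v$, projecting trivially to every $v$ for which the hypothesis gives $\varphi_v\bigl(\Stab_H(X)\bigr)=\{1\}$; discarding these, write $X'$ for the remaining vertices. For $v\in X'$ the projection $P_v\coloneqq\varphi_v(N)$ lies in $R_v$ and, since it has finite index in $\varphi_v\bigl(\Stab_H(X)\bigr)$, which itself has finite index in $\varphi_v(G)\supseteq R_v$, it has finite index in $R_v$.

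The key structural input is that each $R_v$ is just infinite and not virtually abelian. Just infinity follows from the branch condition: every nontrivial normal subgroup of a branch group contains $\Rist_G(\level m)'$ for some $m$, so just infinity of $G$ forces $\Rist_G(v)'$ to have finite index in $\Rist_G(v)$, and a normal closure argument then upgrades this to just infinity of $R_v\cong\Rist_G(v)$. It is not virtually abelian because a just infinite virtually abelian group is commensurable with $\mathbf Z$, whereas $\Rist_G(v)$ contains the direct product $\Rist_G(w_1)\times\Rist_G(w_2)$ of two infinite rigid stabilizers for orthogonal $w_1,w_2>v$. Consequently every factor of the subdirect product $N\subdirect\prod_{v\in X'}P_v$ is non virtually abelian (this property is a commensurability invariant), so the hypothesis of Theorem~\ref{Thm:SubdirectBloc} on virtually abelian factors is vacuously satisfied.

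The delicate point, and the main obstacle, is that the factors of this subdirect product are the $P_v$, and a finite index subgroup of a just infinite group need not be just infinite: for example an index two subgroup of $\Grig$ may contain $K\times\{1\}$ as a normal subgroup of infinite index. This failure occurs exactly when a normal subgroup of the section splits across the subtree $T_v$, and the remedy is to refine $X$ at such a vertex, replacing $v$ by a transversal of $T_v$ deep enough that the split normal subgroup is separated into pieces supported on orthogonal descendants and the new sections of $N$ become just infinite. Such a refinement preserves the hypothesis by Lemma~\ref{Lemma:LevelVSTransversal} and, just infinity of $R_v$ bounding how far the splitting can propagate, terminates; alternatively one exhibits $N$ as a finite index subgroup of a subdirect product of the just infinite groups $R_v$ and appeals to Lemma~\ref{Lemma:SubVirtDiag}. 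Either way Corollary~\ref{cor:dichotomy} and Theorem~\ref{Thm:SubdirectBloc} apply and yield that $N$ is virtually diagonal by blocks: there is a partition $X'=\bigsqcup_\alpha I_\alpha$ into classes of pairwise dependent indices together with diagonal subgroups $D_\alpha\leq L_\alpha^{\abs{I_\alpha}}$, the group $L_\alpha$ being of finite index in $R_v$ for $v\in I_\alpha$, such that $\prod_\alpha D_\alpha$ has finite index in $N$.

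It then remains to translate the abstract decomposition back. Each class $I_\alpha\subseteq X'$ consists of pairwise orthogonal vertices, since distinct vertices of a transversal are orthogonal, so $U_\alpha\coloneqq I_\alpha$ is an ordered orthogonal set; transporting $D_\alpha$ through the isomorphisms $\varphi_v|_{\Rist_G(v)}$ turns it into a diagonal subgroup of $G$ in the sense of Definition~\ref{Def:Diagonal}, with common group $L=L_\alpha$, finite index subgroups $L_j\leq\varphi_v\bigl(\Rist_G(v)\bigr)$, and the isomorphisms $\psi_j$ read off from the diagonal. Because the supports $U_\alpha$ lie in disjoint subtrees rooted at distinct vertices of $X'$, they are pairwise orthogonal, so $A\coloneqq\prod_\alpha D_\alpha$ is a block subgroup in the sense of Definition~\ref{Def:BlockStructureBranch}; by construction it has finite index in $N$, and hence in $H$. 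The crux of the whole argument is thus the passage through just infinite factors, namely establishing that the rigid stabilizer sections $R_v$ are just infinite and ensuring, by refining $X$, that the factors of the relevant subdirect product are just infinite so that Theorem~\ref{Thm:SubdirectBloc} becomes applicable.
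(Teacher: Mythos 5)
Your overall strategy is the paper's: pass to $N\coloneqq H\cap\prod_{v\in X}\Rist_G(v)$, invoke the subdirect product machinery of Section~\ref{Section:Subdirect}, and translate the resulting decomposition back into a block subgroup. You also correctly isolate the genuine difficulty, namely that the factors of this subdirect product are only of finite index in just infinite groups, so Theorem~\ref{Thm:SubdirectBloc} does not apply as stated; this is exactly the step the paper compresses into ``a variation of Theorem~\ref{Thm:SubdirectBloc} for branch groups''. However, your treatment of that step contains a false claim and two remedies that both fail. The false claim: $R_v=\varphi_v\bigl(\Rist_G(v)\bigr)$ is in general \emph{not} just infinite. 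In $\Grig$ one has $R_v=B$ (first level) or $K$ (deeper), and the geometric factor $K\times\{1\}$ is a normal subgroup of \emph{infinite} index in either: it is normalized because $B$ and $K$ stabilize the first level of $T_v$ and $K\trianglelefteq\Grig$. Your ``normal closure argument'' cannot repair this, since a normal subgroup of $\Rist_G(v)$ is not normal in $G$, so just infinity of $G$ says nothing about it; this is precisely why the paper works with the full sections $\varphi_v(G)$, which are just infinite and branch by \cite[Lemma 5.1]{L2019}, and never claims this for $R_v$.

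More seriously, the intermediate conclusion you aim for --- a virtually-diagonal-by-blocks decomposition whose blocks are supported on subsets of $X'$ --- is simply false, so no repair of the hypotheses of Theorem~\ref{Thm:SubdirectBloc} relative to the index set $X'$ can succeed. Take $G=\Grig$, $X=\level{1}$, and
\[
H\coloneqq\setst{h\in\Stab_\Grig(\level{2})}{\bigl(\varphi_{00}(h),\varphi_{01}(h),\varphi_{10}(h),\varphi_{11}(h)\bigr)=(k_1,k_2,k_2,k_3),\ k_i\in K},
\]
which lies in $K$ because $\Grig$ is regular branch over $K$. It satisfies the hypothesis of the lemma ($\varphi_0(H)=\varphi_1(H)=K\times K$ has finite index in $\Grig$) and equals its own $N$. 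Yet $H\cap\Rist_\Grig(0)\cong K$ maps under $\varphi_0$ onto the geometric $K\times\{1\}$, of infinite index in $B$, so $H$ contains no $D_0\times D_1$ with the $D_i$ of finite index; and $\ker(\varphi_0|_H)\cong K$ is infinite, so no diagonal subgroup over $\{0,1\}$ can have finite index in $H$. Thus $H$ is not virtually diagonal by blocks with respect to the two first-level coordinates, and by Lemma~\ref{Lemma:SubVirtDiag} it cannot have finite index in \emph{any} subdirect product of the first-level sections in these coordinates --- this kills your second remedy outright. Your first remedy (refining the transversal) is indeed the right move: here the block structure appears at level $2$, with supports $\{00\}$, $\{01,10\}$, $\{11\}$. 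But your stated goal and termination criterion, ``the new sections of $N$ become just infinite'', is not what makes this work and need never hold: the level-$2$ sections of this $H$ are all equal to $K$, which is not just infinite, yet the decomposition is already visible there. What is actually needed --- and what the paper's ``variation'' asserts, with diagonal subgroups supported on an orthogonal set $U$ of vertices of $T$ that need not lie in $X$ --- is a termination/decomposition argument resting on the normal subgroup structure of branch groups, not on Goursat's lemma alone; neither of your remedies supplies it.
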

\begin{proof}
Let $H\leq G$ be a subgroup and suppose that there exists a transversal $X$ of $T$ such that for every $v\in X$ the section $\varphi_v\bigl(\Stab_H(X)\bigr)$ is either trivial or has finite index in $\varphi_v(G)$.
Since $G$ is just infinite and branch, the groups $\varphi_v(G)$ are also just infinite and branch \mbox{\cite[Lemma 5.1]{L2019}}, and hence not virtually abelian.

Since $G$ is branch, the group $K\coloneqq H\cap \prod_{v\in X}\Rist_G(v)$ has finite index in~$H$, and for every $v\in X$ the section $\varphi_v(K)$ is either trivial or has finite index in $\varphi_v(G)$.
A variation of Theorem~\ref{Thm:SubdirectBloc} for branch groups implies that $K$ is virtually diagonal by blocks in the sense of Definition~\ref{Def:BlockStructure}.
More precisely, there exists a partition of $U$ into $U=\bigsqcup_{i=1}^nU_i$ orthogonal, pairwise orthogonal subsets of vertices of $T$.
Moreover, the subgroup $K$ contains $K'\coloneqq\prod_{i=1}^nD_i$ as a finite index subgroup, where the $D_i$s are diagonal subgroups.
Since $K'$ has finite index in $K$ and hence in $H$, for $u\in U$ in the support of $D_i$, the section $\varphi_u(D_i)$ has finite index in $\varphi_u(H)$ and hence in $\varphi_u\bigl(\Rist_G(u)\bigr)$.
Therefore, the $D_i$s are diagonal subgroups in the sense of Definition~\ref{Def:Diagonal} and $K'$ is a block subgroup in the sense of Definition~\ref{Def:BlockStructureBranch}, of finite index in $H$.
\end{proof}
In order to prove the following lemma, we introduce a partial order on the  sets of transversals.
For two transversals $Y$ and $X$ of $T$, we say that $Y\geq X$ if every element $y$ of $Y$ is the descendant of some $x$ in $X$.
\begin{lemma}\label{Lemma:WeakSubgroupInduction}
Let $G\leq\Aut(T)$.
Let $H\leq G$ be a subgroup containing a block subgroup $A$ with $[H:A]<\infty$.
Then there exists a transversal $X$ of $T$ such that for every $v\in X$ the section $\varphi_v\bigl(\Stab_H(X)\bigr)$ is either trivial or has finite index in $\varphi_v(G)$.
\end{lemma}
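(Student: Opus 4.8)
The plan is to read off a transversal from the support of the block subgroup $A$ and then to push it deeper in the subtrees lying off the support until the sections along it become trivial.

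Write $A=\prod_{i=1}^n D_i$ with the $D_i$ diagonal subgroups whose supports $U_i$ are pairwise orthogonal, and set $U\coloneqq\bigcup_{i=1}^n U_i$, a finite orthogonal set of vertices. First I would complete $U$ to a transversal $X=U\sqcup W$ of $T$; this is possible because $U$ is a finite orthogonal set, and since any transversal is itself an orthogonal set, every $w\in W$ is automatically orthogonal to $U$. Every element of $A$ lies in $\prod_{u\in U}\Rist_G(u)$, so it fixes each $u\in U$ and acts trivially on $T_w$ for all $w\in W$; hence $A\leq\Stab_H(X)$, and as $[H:A]<\infty$ the subgroup $\Stab_H(X)$ contains $A$ and has finite index in $H$.

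The crux of the argument is the behaviour at the filler vertices $w\in W$, and this is the step I expect to be the main obstacle. Since $A\leq\Stab_H(X)$ has finite index and $\varphi_w(A)=\{1\}$ (because $A$ is trivial on $T_w$), the section $F_w\coloneqq\varphi_w\bigl(\Stab_H(X)\bigr)$ is a quotient of $\Stab_H(X)/A$ and is therefore finite. A finite group of automorphisms of $T_w$ acts faithfully, so the descending chain $\Stab_{F_w}(\level{m})$ (levels taken inside $T_w$) has trivial intersection and hence equals $\{1\}$ for some $m_w$. I would then refine $X$ to a new transversal $X'$ by replacing each $w\in W$ by the set of vertices of $T_w$ at level $m_w$. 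Because a tree automorphism fixing a vertex also fixes all of its ancestors, fixing $X'$ pointwise forces fixing $X$ pointwise, so $\Stab_H(X')\leq\Stab_H(X)$; consequently $\varphi_w\bigl(\Stab_H(X')\bigr)\leq\Stab_{F_w}(\level{m_w})=\{1\}$. Thus $\Stab_H(X')$ is trivial on the whole of each $T_w$, and in particular every section of $\Stab_H(X')$ at a vertex of $X'$ coming from $W$ is trivial.

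It remains to treat the vertices of the support. As $A$ is trivial on each $T_w$ it fixes the refinement, so we still have $A\leq\Stab_H(X')$. For $u\in U$, which by orthogonality lies in the support of a single $D_i$, the section $\varphi_u\bigl(\Stab_H(X')\bigr)$ contains $\varphi_u(A)=\varphi_u(D_i)$, which by the definition of a diagonal subgroup is a finite-index subgroup of $\varphi_u\bigl(\Rist_G(u)\bigr)$; since $\Rist_G(u)$ has finite index in $\Stab_G(u)$ for the branch groups to which the lemma is applied, this section has finite index in $\varphi_u(G)$. Hence $X'$ is a transversal for which every section of $\Stab_H(X')$ is either trivial (at the vertices arising from $W$) or of finite index in $\varphi_v(G)$ (at the vertices of $U$), as required. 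Beyond the finiteness argument at the filler vertices, the only points needing care are the routine verifications that refining a transversal by sub-transversals again yields a transversal and that the various finite-index relations are preserved under these operations.
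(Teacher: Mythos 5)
Your argument is correct and follows essentially the same route as the paper's proof: complete the union of the supports to a transversal $X$, observe that $A\leq\Stab_H(X)$ has finite index in $H$ so that the sections at the filler vertices are finite, then refine the transversal inside the subtrees $T_w$ until those finite sections become trivial. The paper compresses your explicit refinement (choosing a level $m_w$ deep enough that the finite group $F_w$ has trivial level stabilizer) into its single closing sentence that some transversal $Y\geq X$ works, so your write-up is in effect an expanded version of the same proof.

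One justification needs repair, and it is precisely the point the paper itself glosses over. At a support vertex $u$ you justify the finite-index conclusion by claiming that $\Rist_G(u)$ has finite index in $\Stab_G(u)$ when $G$ is branch. That is false: branchness says that the rigid \emph{level} stabilizer $\Rist_G(n)=\prod_{v\in\level{n}}\Rist_G(v)$ has finite index in $G$, but a single factor $\Rist_G(u)$ typically has infinite index in $\Stab_G(u)$ (in the Grigorchuk group, $\Stab_\Grig(\level{1})$ contains $\Rist_\Grig(0)\times\Rist_\Grig(1)$ with both factors infinite, so $\Rist_\Grig(0)$ has infinite index in it). The fact you actually need is that $\varphi_u\bigl(\Rist_G(u)\bigr)$ has finite index in $\varphi_u(G)$, and this does follow from branchness: $\Rist_G(n)$ has finite index in $\Stab_G(u)$, and $\varphi_u\bigl(\Rist_G(n)\bigr)=\varphi_u\bigl(\Rist_G(u)\bigr)$ because every factor $\Rist_G(v)$ with $v\neq u$ acts trivially on $T_u$. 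With this substitution your proof goes through; you were in any case right to flag that some such hypothesis is needed, since the lemma is stated for arbitrary $G\leq\Aut(T)$ and without branchness neither your argument nor the paper's yields finite index in $\varphi_u(G)$ rather than merely in $\varphi_u\bigl(\Rist_G(u)\bigr)$. (A purely cosmetic point: since $A$ need not be normal in $\Stab_H(X)$, the phrase ``quotient of $\Stab_H(X)/A$'' should rather say that $\varphi_w$ kills $A$, whence $\abs{F_w}\leq[\Stab_H(X):A]$.)
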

\begin{proof}
Let $A=\prod_{i=1}^kD_i\leq G$ be a block subgroup and for each $i$, let $U_i$ be the support of the diagonal subgroup $D_i$. In particular, the $U_i$ are orthogonal and pairwise orthogonal.
Let $X$ be the unique minimal transversal of $T$ containing all the $U_i$s.
In particular, $\Stab_A(X)=\Stab_A(\bigcup_{i=1}^kU_i)$.
Then $\varphi_v\bigl(\Stab_A(X)\bigr)$ is either trivial or has finite index in $\varphi_v(G)$ for every $v\in X$.
If $A$ has finite index in some subgroup $H\leq G$, the subgroup $\varphi_v\bigl(\Stab_H(X)\bigr)$ is either finite or has finite index in $\varphi_v(G)$ for every $v\in X$.
Therefore, there exists another transversal $Y\geq X$ such that $\varphi_v\bigl(\Stab_H(Y)\bigr)$ is either trivial or has finite index in $\varphi_v(G)$ for every $v\in Y$.
\end{proof}
In order to prove our main result, we will need the following:
\begin{proposition}[\cite{FL2019}]\label{Prop:JustInfinite}
Let $G\leq\Aut(T)$ be a finitely generated branch group with the subgroup induction property.
Then $G$ is torsion and just infinite.
\end{proposition}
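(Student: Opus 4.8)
The plan is to prove the two assertions in turn, exploiting the transversal reformulation of the subgroup induction property from Proposition~\ref{Proposition:SubgroupInductionW}, and to deduce just infinity from torsion. For torsion, I would fix $g\in G$ and apply the subgroup induction property to the finitely generated cyclic subgroup $\langle g\rangle$. This produces a transversal $X$ of $T$ such that for every $v\in X$ the section $\varphi_v\bigl(\Stab_{\langle g\rangle}(X)\bigr)$ is either trivial or of finite index in $\varphi_v(G)$. Now $\Stab_{\langle g\rangle}(X)$ is a subgroup of $\langle g\rangle$, hence cyclic, so each of its sections is cyclic as well; consequently a section of finite index in $\varphi_v(G)$ would force $\varphi_v(G)$ to be virtually cyclic.

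The crux is to rule this out. Since $G$ is branch it is weakly branch, so for two distinct children $u_1,u_2$ of $v$ the rigid stabilizers $\Rist_G(u_1)$ and $\Rist_G(u_2)$ are infinite, commute, and meet trivially, so that $\Rist_G(u_1)\times\Rist_G(u_2)\leq\Rist_G(v)$; as $\varphi_v$ is injective on $\Rist_G(v)$, the group $\varphi_v(G)$ contains a copy of $\Rist_G(u_1)\times\Rist_G(u_2)$, a direct product of two infinite groups. A virtually cyclic group cannot contain such a product: each of the two infinite factors would be an infinite virtually cyclic subgroup, hence contain an infinite cyclic subgroup, producing a copy of $\mathbf{Z}^2$, which is impossible in a virtually cyclic group. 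Therefore every section of $\Stab_{\langle g\rangle}(X)$ is trivial, and since $X$ is a transversal this forces $\Stab_{\langle g\rangle}(X)=\{1\}$; being the pointwise stabilizer of a finite set it has finite index in $\langle g\rangle$, so $\langle g\rangle$ is finite and $g$ has finite order. I expect this non-virtual-cyclicity step to be the main obstacle: it is tempting to invoke that sections of a just infinite branch group are again just infinite branch, hence not virtually abelian, but this presupposes just infinity and would be circular, so the argument must instead use the direct product of rigid stabilizers as above.

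For just infinity I would use the torsion just obtained. As $G$ is branch, $\Rist_G(\level n)=\prod_{v\in\level n}\Rist_G(v)$ has finite index in $G$, hence is finitely generated, and therefore so is each direct factor $\Rist_G(v)$; moreover $\Rist_G(v)$ is torsion. A finitely generated torsion abelian group is finite, so each $\Rist_G(v)/[\Rist_G(v),\Rist_G(v)]$ is finite, whence $[\Rist_G(\level n),\Rist_G(\level n)]=\prod_{v\in\level n}[\Rist_G(v),\Rist_G(v)]$ has finite index in $\Rist_G(\level n)$ and thus in $G$. It then remains to invoke the standard fact that in a branch group every nontrivial normal subgroup $N$ contains $[\Rist_G(\level n),\Rist_G(\level n)]$ for some $n$: choosing $1\neq g\in N$ and a vertex $v$ with $g(v)\neq v$ (so $v$ and $g(v)$ are orthogonal), the usual commutator computation gives $[\Rist_G(v),\Rist_G(v)]\leq N$, and normality together with level transitivity promotes this to $[\Rist_G(\level n),\Rist_G(\level n)]\leq N$. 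Combining the two, $[G:N]\leq[G:[\Rist_G(\level n),\Rist_G(\level n)]]<\infty$, so every proper quotient of the infinite group $G$ is finite; that is, $G$ is just infinite.
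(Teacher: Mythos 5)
Your proof is correct, but note that there is nothing internal to compare it with: the paper does not prove this proposition, it imports it wholesale from~\cite{FL2019}, and your argument is essentially a reconstruction of the proof in that reference. Both delicate points are handled properly. For torsion, applying the transversal form of the subgroup induction property (which, by the remark following Proposition~\ref{Proposition:SubgroupInductionW}, is what the property means for a branch group that need not be self-similar) to the cyclic group $\gen{g}$ is the right move, and you correctly identify and avoid the circularity lurking in ``sections of just infinite branch groups are just infinite branch'': instead you observe that for two children $u_1,u_2$ of $v\in X$ the product $\Rist_G(u_1)\Rist_G(u_2)\leq\Rist_G(v)$ is an internal direct product of two infinite groups (branch implies weakly branch), that $\varphi_v$ is injective on $\Rist_G(v)$, and hence that $\varphi_v(G)$ contains a copy of $\Rist_G(u_1)\times\Rist_G(u_2)$ and so cannot be virtually cyclic, since it would then contain $\mathbf Z^2$. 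Since a transversal is finite and vertex orbits under $\gen{g}$ lie in finite levels, $\Stab_{\gen g}(X)$ has finite index in $\gen{g}$ and is trivial, so $g$ has finite order. For just infiniteness, your argument is the standard Grigorchuk criterion: each $\Rist_G(v)$ is a direct factor of the finite-index (hence finitely generated) subgroup $\Rist_G(n)$ and is torsion, so its abelianization is finite and $[\Rist_G(n),\Rist_G(n)]$ has finite index in $G$; combined with the classical fact that every nontrivial normal subgroup of a level-transitive group acting on $T$ contains $[\Rist_G(n),\Rist_G(n)]$ for some $n$ (Grigorchuk's lemma, available in the paper's bibliography as~\cite{MR1765119}), every proper quotient is finite. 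The only ingredient you quote rather than prove is that last containment, which is a legitimate citation; everything else is self-contained and sound.
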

%
%
%
%
We are now able to prove a slightly  more precise version of Theorem~\ref{Thm:MainThm}.
\begin{theoremmain}
Let $G$ be a finitely generated branch group.
Then the following are equivalent.
\begin{enumerate}
\item
A subgroup $H$ of $G$ is finitely generated if and only if it contains a block subgroup $A$ with $[H:A]<\infty$,
\item
A subgroup $H$ of $G$ is finitely generated if and only if there exists $n$ such that for every $v\in \level{n}$ the section $\varphi_v\bigl(\Stab_H(\level{n})\bigr)$ is either trivial or has finite index in $\varphi_v\bigl(\Stab_G(v)\bigr)$,
\item
A subgroup $H$ of $G$ is finitely generated if and only if there exists a transversal $X$ of $T$ such that for every $v\in X$ the section $\varphi_v\bigl(\Stab_H(X)\bigr)$ is either trivial or has finite index in $\varphi_v\bigl(\Stab_G(v)\bigr)$.
\end{enumerate}
If moreover $G$ is self-similar, then this is equivalent to 
\begin{enumerate}
\setcounter{enumi}{3}
\item 
The group $G$ has the subgroup induction property in the sense of Definition~\ref{Definition:SubgroupInduction}.
\end{enumerate}
\end{theoremmain}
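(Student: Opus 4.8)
The plan is to route every biconditional through a single transversal condition and to exploit the fact that the \emph{forward} half of each of the three equivalences is nothing but the subgroup induction property, which via Proposition~\ref{Prop:JustInfinite} forces $G$ to be just infinite. This last observation is the device that unlocks the genuinely hard implication, so I would make it the backbone of the argument rather than assume just infinity outright.

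First I would dispose of the equivalence of the second and third statements. Since $\varphi_v\bigl(\Stab_G(v)\bigr)$ is exactly what the shorthand $\varphi_v(G)$ denotes, Lemma~\ref{Lemma:LevelVSTransversal} shows that the level condition and the transversal condition single out the same subgroups $H$, so the two biconditionals coincide verbatim. Next I would record the two cheap implications, valid for any finitely generated branch group with no just-infinity hypothesis: if $H$ contains a block subgroup $A$ with $[H:A]<\infty$ then $H$ is finitely generated (block subgroups of finitely generated branch groups are finitely generated, and finite index preserves this), and by Lemma~\ref{Lemma:WeakSubgroupInduction} such an $H$ also satisfies the transversal condition.

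The crux is the bootstrap. Suppose any one of the statements (1), (2), (3) holds. Its forward direction asserts that every finitely generated $H$ satisfies, respectively, the block, the level, or the transversal condition; in each case Lemma~\ref{Lemma:LevelVSTransversal} (for the level condition) and Lemma~\ref{Lemma:WeakSubgroupInduction} (for the block condition) show that every finitely generated $H$ satisfies the transversal condition. By the characterization recorded in Proposition~\ref{Proposition:SubgroupInductionW} and the remark following it, this is precisely the subgroup induction property, so Proposition~\ref{Prop:JustInfinite} applies and $G$ is just infinite. Now Lemma~\ref{Lemma:ProofOfMainThm} becomes available and supplies the missing implication: any $H$ satisfying the transversal condition contains a block subgroup of finite index, hence is finitely generated. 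With just infinity in hand the three conditions collapse, since the transversal property and the block property are equivalent for every $H$ (Lemma~\ref{Lemma:WeakSubgroupInduction} one way, Lemma~\ref{Lemma:ProofOfMainThm} the other) and each is equivalent to finite generation; chasing these equivalences through yields (1) $\Leftrightarrow$ (2) $\Leftrightarrow$ (3).

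Finally, for self-similar $G$ I would close the loop with the fourth statement. Proposition~\ref{Proposition:SubgroupInductionW} identifies the subgroup induction property of Definition~\ref{Definition:SubgroupInduction} with exactly the assertion that every finitely generated $H$ satisfies the transversal condition, i.e.\ with the forward half of statement (3); thus (3) $\Rightarrow$ (4) is immediate, while (4) $\Rightarrow$ (3) again runs the bootstrap above. The main obstacle is conceptual rather than computational: one must resist assuming just infinity and instead extract it, via Proposition~\ref{Prop:JustInfinite}, from the weak forward content of the hypotheses, because it is only once $G$ is known to be just infinite that Lemma~\ref{Lemma:ProofOfMainThm} can be invoked to turn the transversal condition back into a genuine block decomposition.
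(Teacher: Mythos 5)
Your proposal is correct and follows essentially the same route as the paper: reduce all conditions to the transversal condition via Lemma~\ref{Lemma:LevelVSTransversal} and Lemma~\ref{Lemma:WeakSubgroupInduction}, identify its forward half with the (generalized) subgroup induction property via Proposition~\ref{Proposition:SubgroupInductionW}, extract just infinity from Proposition~\ref{Prop:JustInfinite}, and only then invoke Lemma~\ref{Lemma:ProofOfMainThm} to close the loop. If anything, your explicit ``bootstrap'' handling of the backward directions (condition implies finitely generated) spells out a step the paper leaves implicit in its terse reduction to the forward-only statements.
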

\begin{proof}
First of all, if $G$ is a finitely generated branch group, then every block subgroup is also finitely generated and therefore, if $H\leq G$ contains a block subgroup with $[H:A]<\infty$, then $H$ is finitely generated.
Hence, Theorem~\ref{Thm:MainThm} follows from the fact that the following are equivalent:
\begin{enumerate}\renewcommand{\theenumi}{\alph{enumi}}
\item\label{item:32}
Every finitely generated subgroup $H$ of $G$ contains a block subgroup $A$ with $[H:A]<\infty$,
\item\label{item:33bis}
For every finitely generated subgroup $H$ of $G$ there exists $n$ such that for every $v\in \level{n}$ the section $\varphi_v\bigl(\Stab_H(\level{n})\bigr)$ is either trivial or has finite index in $\varphi_v\bigl(\Stab_G(v)\bigr)$,
\item\label{item:33}
For every finitely generated subgroup $H$ of $G$ there exists a transversal $X$ of $T$ such that for every $v\in X$ the section $\varphi_v\bigl(\Stab_H(X)\bigr)$ is either trivial or has finite index in $\varphi_v\bigl(\Stab_G(v)\bigr)$,
\item\label{item:31} (If $G$ is self-similar)
The group $G$ has the subgroup induction property.
\end{enumerate}
Properties~\ref{item:33bis} and~\ref{item:33} are equivalent by Lemma~\ref{Lemma:LevelVSTransversal} and if $G$ is self-similar they are equivalent to Property~\ref{item:31} by Proposition~\ref{Proposition:SubgroupInductionW}.
Moreover, by Lemma~\ref{Lemma:WeakSubgroupInduction}, Property~\ref{item:33} is implied by Property~\ref{item:32}.
On the other hand, if $G$ has Property~\ref{item:33bis}, then it is just infinite by Proposition~\ref{Prop:JustInfinite} and hence has Property~\ref{item:32} by Lemma~\ref{Lemma:ProofOfMainThm}.
\end{proof}
We finally derive Theorem~\ref{Thm:LERF} from Theorem~\ref{Thm:MainThm}.
\begin{proof}[Proof of Theorem~\ref{Thm:LERF}]
Let $G$ be a finitely generated branch group with the congruence subgroup property and the subgroup induction property.

Let $H$ be a finitely generated subgroup of $G$.
By Theorem~\ref{Thm:MainThm}, there exists a block subgroup $A\leq H$ with $[H:A]$ finite.
By~\cite[Lemma 6.7]{L2019}, $A$ is closed in the profinite topology and so is $H$.
We have just proved that $G$ is subgroup separable, as desired.
\end{proof}

\begin{center}
\textsc{Rostistlav Grigorchuk, Mathematical Department, Texas A\&M University, College Station, TX~77843\--368,~USA}

\textit{E-mail adress: }\texttt{Grigorch@math.tamu.edu}\\[2ex]

\textsc{Paul-Henry Leemann, Institut de Math\'ematiques, Universit\'e de Neuch\^atel, Rue Emile-Argand 11, 2000~Neuch\^atel, Switzerland}\\

\textit{E-mail adress: }\texttt{Paul-Henry.Leemann@unine.ch}\\[2ex]

\textsc{Tatiana Nagnibeda, Section de math\'ematiques, Universit\'e de Gen\`eve, 7-9, rue du Conseil G\'en\'eral, 1205~Gen\`eve, Switzerland\\%
and\\%
Laboratory of Modern Algebra and Applications, St. Petersburg State University, St.Petersburg, Russia}\\
\textit{E-mail adress: }\texttt{Tatiana.Nagnibeda@unige.ch}
\end{center}
\end{document}